\newcommand\cyr
\renewcommand\rmdefault{wncyr}
\renewcommand\sfdefault{wncyss}
\renewcommand\encodingdefault{OT2}
\DeclareTextFontCommand{\textcyr}{\cyr}
\newtheorem{theorem}{Theorem}[section]
\newtheorem{lemma}[theorem]{Lemma}
\newtheorem{corollary}[theorem]{Corollary}
\theoremstyle{definition}
\newtheorem{definition}[theorem]{Definition}
\theoremstyle{remark}
\newtheorem{remark}[theorem]{Remark}
\numberwithin{equation}{section}
\begin{document}
\setcounter{page}{1}

\title[Arithmetic topology of 4-manifolds]{Arithmetic topology of 4-manifolds}

\author[Nikolaev]
{Igor V. Nikolaev$^1$}

\address{$^{1}$ Department of Mathematics and Computer Science, St.~John's University, 8000 Utopia Parkway,  
New York,  NY 11439, United States.}
\email{\textcolor[rgb]{0.00,0.00,0.84}{igor.v.nikolaev@gmail.com}}


\subjclass[2010]{Primary 16W20, 57N13; Secondary 11R32.}

\keywords{4-dimensional manifolds, non-commutative Galois theory.}


\begin{abstract}
We construct a functor  from the  smooth 4-dimensional 
manifolds to  the hyper-algebraic number  fields, i.e. 
  fields with non-commutative multiplication. 
  It is proved  that  that the simply connected 4-manifolds correspond to 
  the  abelian   extensions.      
   We  recover the Rokhlin and  Donaldson's Theorems 
   from the Galois theory 
  of the non-commutative fields. 
  \end{abstract}

\maketitle

\section{Introduction}
 The arithmetic topology studies  an interplay   between  the 3-dimensional manifolds
 and the fields of algebraic numbers [Morishita 2012] \cite{M}.  
 The idea  dates back to C.~F.~Gauss. 
  Namely, there exists a map $F$ from   the $3$-dimensional manifolds $\mathscr{M}^3$ 
 to the algebraic  number fields $K$. 
 Such a map transforms   links  $\mathscr{Z}\subset\mathscr{M}^3$ (knots   $\mathscr{K}\subset \mathscr{M}^3$, resp.) into the ideals 
 (prime ideals, resp.) of  the ring of integers $O_K$ of  $K$.
The map  $F$ is a functor on the category of  3-dimensional 
 manifolds $\mathscr{M}^3$ with  values in the category of algebraic number fields $K$ \cite[Theorem 1.2]{Nik1}.

The aim of our note is an extension of $F$  to the smooth 4-dimensional manifolds $\mathscr{M}^4$.
The range of $F$ is  no longer the fields of algebraic numbers, but the  fields   $\mathbb{K}$ with
 non-commutative multiplication, e.g.   the quaternions or  the cyclic division algebras. 
 We refer to  $\mathbb{K}$ as a  hyper-algebraic number field.
  The Galois theory of  such fields was  elaborated 
by  [Cartan 1947] \cite{Car1} and  [Jacobson 1940]  \cite{Jac1},  see
{\cyr [Kharchenko} 1996] \cite{K} for a detailed account.

To introduce the map $F$, let  $N$ be a finite index  subgroup of the mapping class group $Mod ~\mathscr{M}^3$,
where   $\mathscr{M}^3$ is  a  3-dimensional manifold.  
The subgroup $N$ gives rise to  a smooth branched cover $\mathscr{M}^4_N$ of the 4-sphere $S^4$, 
see  [Piergallini 1995] \cite{Pie1}  and  Section 2.1. 
On the other hand, we have $\widehat{N} / \widehat{\mathbf{Z}} ~\cong  G_{\mathbb{K}}$,
where  $\widehat{N}$ ($\widehat{\mathbf{Z}}$, resp.) is a profinite completion of $N$ ($\mathbf{Z}$, resp.) 
and $G_{\mathbb{K}}$ is the absolute Galois group
of $\mathbb{K}$ \cite[Theorem 1.2]{Nik2}.
Recall  that  the  field $\mathbb{K}$ can be recovered up to an isomorphism from  the group $G_{\mathbb{K}}$, 
 see  [Uchida 1976] \cite[Corollary 2]{Uch1} and remark \ref{rmk2.4}. 
We define  $F$  as a composition of maps:
\begin{equation}\label{eq1.1}
\mathscr{M}^4_N\mapsto \widehat{N} / \widehat{\mathbf{Z}} ~\cong  G_{\mathbb{K}}\mapsto \mathbb{K}. 
\end{equation}

\bigskip
\begin{remark}\label{rmk1.1}
The map $F$ can be equivalently defined via a  $C^*$-algebra $\mathbb{E}_{\mathscr{M}^4}$ 
generated by  the diffeomorphisms of  $\mathscr{M}^4$ \cite{Nik4}. 
Namely,  the K-theory of $\mathbb{E}_{\mathscr{M}^4}$ gives rise to a number field $K$ whose 
central simple algebra is isomorphic to  $\mathbb{K}$.
In particular, the exotic smoothings of $\mathscr{M}^4$ are classified by the Brauer group 
of $K$, see  \cite{Nik4} for the details. 
 \end{remark}

\bigskip
Let $O_{\mathbb{K}}$ be the ring of integers of the field $\mathbb{K}$. 
Unlike  $O_K$,  the ring  $O_{\mathbb{K}}$ is always  simple, 
i.e. all two-sided ideals of  $O_{\mathbb{K}}$ are trivial. 
 Hence we must use the dynamical ideals (dynamials),
i.e. the crossed products $O_{\mathbb{K}}\rtimes m\mathbf{Z}$
by an endomprphism  of $O_{\mathbb{K}}$ of degree $m\ge 1$,  see  \cite{Nik3}
for the details and  motivation.  The dynamial   $O_{\mathbb{K}}\rtimes m\mathbf{Z}$ is minimal 
if and only if   $m=p$ is a prime number \cite[Theorem 1.7]{Nik3}.

To formalize our results, recall that 
the groups $N\not\cong N'$ are a Grothendieck pair, if $\widehat{N}\cong\widehat{N'}$. 
We shall denote by  $\mathfrak{M}^4$ a category of  all smooth  4-dimensional manifolds $\mathscr{M}^4_N$,
such that:  (i) the set $\mathfrak{M}^4$ contains no  $\mathscr{M}^4_N$ and  $\mathscr{M}^4_{N'}$ 
such that   $N$ and $N'$ are a Grothendieck pair
and (ii) the arrows of $\mathfrak{M}^4$ are  differentiable maps  between such  manifolds.
Denote by  $\mathfrak{K}$ a category of the hyper-algebraic number fields, such that
the arrows of $\mathfrak{K}$ are  injective homomorphisms between these fields.
The knotted surface   is a 
transverse immersion $\iota:  ~X_{g_1}\cup\dots X_{g_n}\hookrightarrow \mathscr{M}^4$ 
of a collection  of the 2-dimensional  orientable  surfaces  $X_{g_i}$ of the genera $g_i\ge 0$. 
The  $\iota(X_{g_1}\cup\dots X_{g_n})$ is called a surface knot $\mathscr{K}$  if $n=1$ and a  surface link 
$\mathscr{Z}$  otherwise. 
Our main result can be formulated  as follows. 
\begin{theorem}\label{thm1.1}
The map  $F: \mathfrak{M}^4\to\mathfrak{K}$ is a covariant functor,
such that: 

\medskip
(i) $F(S^4)\cong \mathbb{H}$ is the field of  quaternions; 

\smallskip
(ii) the dynamials  $O_{\mathbb{K}}\rtimes m\mathbf{Z}$ correspond to 
the surface links  $\mathscr{Z}\subset\mathscr{M}^4$; 

\smallskip
(iii)   the minimal dynamials $O_{\mathbb{K}}\rtimes p\mathbf{Z}$  correspond to
the surface knots  $\mathscr{K}\subset\mathscr{M}^4$. 
\end{theorem}
\begin{remark}
The functor $F$ is independent of  the choice of manifold $\mathscr{M}^3$ 
used in the construction of $F$  according to   the formula (\ref{eq1.1}). Indeed,  
using the manifolds 
$\left\{\mathscr{W}^4_i ~|~ \pi_1(\mathscr{W}^4_i)\cong Mod~\mathscr{M}^3_i\right\}$
described in Section 2.1, one can always take a connected sum $\# \mathscr{W}_i^4$,
so that $Mod~\mathscr{M}^3\cong \pi_1(\# \mathscr{W}_i^4 )$. 
This observation is corroborated by the $C^*$-algebra approach,  see remark \ref{rmk1.1}
 \end{remark}

Let $\mathbb{K}$ be  a Galois extension of $\mathbb{H}$,  i.e.  an  extension 
having the Galois group $Gal~\mathbb{K}$ and obeying  the fundamental 
correspondence of the Galois theory  [Cartan 1947] \cite{Car1},
[Jacobson 1940]  \cite{Jac1}  and {\cyr [Kharchenko} 1996] \cite{K}. 
Recall that  $\mathbb{K}$ is said to be an abelian extension, if $Gal~\mathbb{K}$ 
is an abelian group. Theorem \ref{thm1.1} implies the following result. 
\begin{corollary}\label{cor1.2}
Let $\mathscr{M}^4\in \mathfrak{M}^4$ be a  simply connected 4-manifold.
Then $\mathbb{K}=F(\mathscr{M}^4)$ is an abelian extension. 
 \end{corollary}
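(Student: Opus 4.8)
The plan is to convert the topological hypothesis $\pi_1(\mathscr{M}^4)=1$ into an algebraic statement about the covering subgroup $N\subseteq Mod~\mathscr{M}^3$ and then transport it along the chain (\ref{eq1.1}) to the Galois group of $\mathbb{K}$. First I would recall from Section 2.1 the relation, due to Piergallini \cite{Pie1}, between $N$ and the branched cover $\mathscr{M}^4_N\to S^4$: the restriction of $\mathscr{M}^4_N\to S^4$ to the complement of the branch surface is an honest covering classified by the monodromy of $N$, and $\pi_1(\mathscr{M}^4_N)$ is the quotient of this monodromy group by the normal subgroup generated by the meridians of the branch locus. A simply connected $\mathscr{M}^4_N$ therefore forces the covering to be regular with an abelian group of deck transformations, equivalently its monodromy factors through the first homology of the complement of the branch set; in the language of $Mod~\mathscr{M}^3$ this says that $N$ is normal and that the quotient $Mod~\mathscr{M}^3/N$, together with its conjugation action on $N$, is abelian.

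Next, passing to profinite completions preserves regularity and abelianness of the deck group, so $\widehat N$ becomes a normal subgroup of the profinite completion of $Mod~\mathscr{M}^3$ whose commutator subgroup $[\widehat N,\widehat N]$ lies in the image of $\widehat{\mathbf{Z}}$, the central subgroup occurring in (\ref{eq1.1}). Consequently the quotient $\widehat N/\widehat{\mathbf{Z}}$ is abelian. By \cite[Theorem 1.2]{Nik2} this quotient is isomorphic to the absolute Galois group $G_{\mathbb{K}}$ of $\mathbb{K}=F(\mathscr{M}^4)$, so $G_{\mathbb{K}}$ is an abelian group. Finally, by the fundamental correspondence of the Galois theory of non-commutative fields \cite{Car1}, \cite{Jac1}, \cite{K}, the Galois group $Gal~\mathbb{K}$ of any Galois subextension is a quotient of $G_{\mathbb{K}}$; since the latter is abelian, so is $Gal~\mathbb{K}$, i.e.\ $\mathbb{K}$ is an abelian extension.

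The main obstacle I anticipate is precisely the first step. Simple connectivity of a $4$-manifold only asserts the vanishing of $\pi_1$, hence a priori only kills the abelianization of the relevant covering group, whereas the argument above needs the stronger conclusion that the whole group $\widehat N/\widehat{\mathbf{Z}}$ is abelian. To bridge this gap one must exploit the specific geometry of the Piergallini construction — that the branch locus may be chosen so that the fundamental group of its complement in $S^4$ is itself "almost abelian" (normally generated by meridians) — so that vanishing of $H_1$ of the cover already collapses the covering group down to an abelian group modulo $\widehat{\mathbf{Z}}$. The structural input making this possible is Freedman's classification of simply connected topological $4$-manifolds by the intersection form, and verifying its compatibility with the functor $F$ of Theorem \ref{thm1.1} is where the real work lies; once that compatibility is in place the rest of the argument is the routine transport through (\ref{eq1.1}) sketched above.
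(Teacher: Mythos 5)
Your first step is where the argument breaks, and the patch you propose does not repair it. Simple connectivity of $\mathscr{M}^4_N$ does not force the branched cover $\mathscr{M}^4_N\to S^4$ to be regular, nor does it force the monodromy to factor through $H_1(S^4-X_g;\mathbf{Z})$: already for surfaces, a generic degree-$d$ branched cover $S^2\to S^2$ has full symmetric group monodromy while the total space is simply connected, and the same phenomenon persists in Piergallini's construction, where simply connected $4$-manifolds typically arise as irregular covers. Killing $\pi_1$ of the total space only kills the subgroup $N$ modulo the relations coming from the meridians of the branch locus; it does not make $N$ normal in $Mod~\mathscr{M}^3$ with abelian quotient, and a fortiori does not make $\widehat{N}/\widehat{\mathbf{Z}}\cong G_{\mathbb{K}}$ abelian. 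You flag this difficulty yourself, but the proposed remedy via Freedman's classification cannot close it: the intersection form carries no information about the covering data, and the statement you are aiming at (abelianness of the absolute Galois group $G_{\mathbb{K}}$) is considerably stronger than what the corollary asserts, which concerns only $Gal~\mathbb{K}$.

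The paper's proof avoids $G_{\mathbb{K}}$ altogether. It argues that $\pi_1(\mathscr{M}^4)=1$ forces $\pi_1(S^4-X_g)$ to be a finite group, hence by Hurewicz $H_1(S^4-X_g;\mathbf{Z})$ is a finite abelian group; then, using Theorem \ref{thm1.1}, each self-homeomorphism $h$ of $\mathscr{M}^4$ (inducing an automorphism of $H_1(S^4-X_g;\mathbf{Z})$) is matched with an automorphism $\sigma_h$ of $\mathbb{K}$ fixing a subfield $\mathbb{L}$, which yields $Gal~\mathbb{K}\cong Aut~(H_1(S^4-X_g;\mathbf{Z}))$, and the latter group is abelian for a direct sum of cyclic groups of distinct prime orders. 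In other words, the abelianness is extracted from the finite abelian homology of the branch-locus complement and the relative Galois group of $\mathbb{K}\,|\,\mathbb{L}$, not from any abelianness of the deck group or of $\widehat{N}/\widehat{\mathbf{Z}}$. To salvage your route you would have to either prove the regularity/abelian-monodromy claim (false in general) or redirect the argument toward $H_1(S^4-X_g;\mathbf{Z})$ as the paper does.
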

\begin{remark}\label{rmk1.3}
The converse of \ref{cor1.2} is false. 
\end{remark}
The article is organized as follows. Section 2 contains a brief review
of the 4-dimensional topology, the Galois theory of non-commutative fields  
and the arithmetic topology.  Theorem \ref{thm1.1}, corollary \ref{cor1.2}
and remark \ref{rmk1.3}   are proved 
in Section 3.  In Section 4 we give a proof of the Rokhlin and Donaldson's Theorems
using  the Galois theory of  non-commutative fields.

\section{Preliminaries}
This section contains  a brief review of the smooth 4-dimensional manifolds, 
the Galois theory of non-commutative  fields  
and the arithmetic topology.   We refer the reader to
 [Cartan 1947] \cite{Car1},  [Morishita 2012] \cite{M}
 and  [Piergallini 1995] \cite{Pie1} for a detailed account. 

\subsection{Topology of $4$-dimensional manifolds}
Let $\mathscr{M}^3$ be a closed 3-dimensional manifold. 
Denote by  $Mod ~\mathscr{M}^3$  the mapping class group of $\mathscr{M}^3$, i.e.
a group of isotopy classes of the orientation-preserving 
diffeomorphisms of $\mathscr{M}^3$.
The  $Mod ~\mathscr{M}^3$ is a finitely presented group
[Hatcher \& McCullough 1990] \cite{HaCu}.

Since each finitely presented
group can be realized as the fundamental group of a smooth 4-manifold,
we denote by $\mathscr{W}^4\in \mathfrak{M}^4$ a 4-manifold with
$\pi_1(\mathscr{W}^4)\cong Mod ~\mathscr{M}^3$.
It will be useful for us to represent  $\mathscr{W}^4$ as a PL (and, therefore, smooth) 
4-fold (or 5-fold, if necessary) branched  cover  of the sphere $S^4$. Let $X_g$ be a closed
surface, which we assume for the sake of brevity to be orientable 
of genus $g\ge 0$. Recall that there exists a transverse immersion 
$\iota: X_g\hookrightarrow S^4$, such that $\mathscr{W}^4$ is the 4-fold PL 
cover of  $S^4$ branched at the points of $X_g$ [Piergallini 1995] \cite{Pie1}. 
In other words, the $Mod ~\mathscr{M}^3$ is a normal subgroup of index 4
of the fundamental group $\pi_1(S^4-X_g)$. 
\begin{remark}\label{rmk2.1}
The immersion $\iota$ and surface $X_g$ need not be unique for given 
$\mathscr{W}^4$. Yet one can always restrict to a canonical choice of $\iota$ 
and $X_g$, which we always assume to be the case.
\end{remark}
\begin{definition}
By $\mathscr{M}^4_N$ we understand a
cover of the manifold $\mathscr{W}^4$  corresponding to 
the  finite index subgroup $N$ of the  fundamental group  
$\pi_1(\mathscr{W}^4)$. 
\end{definition}
\begin{remark}\label{rmk2.3}
In view of the continuous map $\mathscr{W}^4\to S^4$,
the manifold $\mathscr{M}^4_N$ is a smooth $4d$-fold branched cover of 
the sphere $S^4$, where $d$ is the index of $N$ in  $Mod ~\mathscr{M}^3$.
The manifold $\mathscr{M}^4_N$ is a regular (Galois) cover of $S^4$  if and only if $N$ 
is a normal subgroup of  $Mod ~\mathscr{M}^3$.
\end{remark}

\subsection{Galois theory for  non-commutative fields}
Denote by $\mathbb{K}$  a division ring, i.e. a ring such that
the set   $\mathbb{K}^{\times}:=\mathbb{K}-\{0\}$ is a group under  multiplication. 
If the group $\mathbb{K}^{\times}$ is commutative, then  $\mathbb{K}$ is a field. 
For otherwise, we refer to  $\mathbb{K}$ as a non-commutative field. 

Roughly speaking, the Galois theory for $\mathbb{K}$ is a correspondence between 
the subfields of $\mathbb{K}$ and the subgroups of a Galois group $Gal ~\mathbb{K}$
of the field $\mathbb{K}$.  Namely, let $G$ be a group of automorphisms of the field 
 $\mathbb{K}$.  It is easy to see, that the set 
 \begin{equation}
 \mathbb{I}_G=\{x\in \mathbb{K} ~|~ g(x)=x ~\hbox{for all} ~g\in G\}
\end{equation}
is a subfield of the field $\mathbb{K}$. 
\begin{definition}
An extension $\mathbb{K}$ of the field $\mathbb{L}$ is called Galois,
if there exists a group $G$ of automorphisms of  the field $\mathbb{K}$, such that 
$\mathbb{L}\cong\mathbb{I}_G$. The Galois group of $\mathbb{K}$ with respect
to $\mathbb{L}$ is defined as $Gal~\mathbb{K}\cong G$. 
The absolute Galois group $G_{\mathbb{K}}$ is defined as a Galois group of the
algebraic closure of $\mathbb{K}$.  The  $G_{\mathbb{K}}$ is a profinite group. 
\end{definition}
\begin{remark}\label{rmk2.4}
By an adaption of the argument for the case of fields [Uchida 1976] \cite[Corollary 2]{Uch1},
one can show that the group $G_{\mathbb{K}}$ defines the underlying  field
$\mathbb{K}$ up to an isomorphism, see lemma \ref{lm3.3}. 
\end{remark}

Unlike the case of fields, the  inner automorphisms of $\mathbb{K}$ 
are a non-trivial group. Indeed, consider an automorphism 
$h_g: \mathbb{K}\to \mathbb{K}$  given by the formula: 
 \begin{equation}\label{eq2.2}
x\mapsto g^{-1}xg, \quad x\in \mathbb{K}, \quad g\in \mathbb{K}^{\times}.
\end{equation}
 By $Inn~(\mathbb{K})$ we denote a group of such automorphisms 
under the composition $h_{g_1}\circ h_{g_2}=h_{g_1g_2}$,
where $g_1,g_2\in \mathbb{K}^{\times}$. 
It follows from (\ref{eq2.2}) that  $Inn~(\mathbb{K})\cong \mathbb{K}^{\times}/C$,
where $C$ is the center of  $\mathbb{K}^{\times}$.

 Let $G$ be a finite group of automorphisms of the field  $\mathbb{K}$.
A normal subgroup $\Gamma :=G ~\cap ~Inn~(\mathbb{K})$ of $G$ consists of 
the inner automorphisms of the field $\mathbb{K}$.  Consider a group
ring
 \begin{equation}\label{eq2.3}
\mathbb{B}(\Gamma)=\sum_{g_i\in\Gamma}\sum_{h_j\in C} g_i h_j.
\end{equation}
It is easy to see,  that  $\mathbb{B}(\Gamma)$ is a finite-dimensional algebra
over its center $C$. The following result has been established in 
 [Cartan 1947] \cite[Th\'eor\`eme 1]{Car1} and 
{\cyr [Kharchenko} 1996] \cite[p. 139]{K}. 
\begin{theorem}
If $\mathbb{K}$ is a Galois extension, then the corresponding 
Galois group $G$ satisfies  the short exact sequence of groups:
\begin{equation}
1\to\Gamma\buildrel\rm \iota\over\to G\to G/\Gamma\to 1,  
\end{equation}
where $\iota$ is an inclusion and $|G/\Gamma|=\dim_C \mathbb{B}(\Gamma)$. 
\end{theorem}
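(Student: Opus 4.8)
The plan is to split the statement into a purely group-theoretic part, the exactness of the sequence, and a quantitative part, the identity $|G/\Gamma|=\dim_C\mathbb{B}(\Gamma)$, and to reduce the latter to the fundamental dimension theorem of the non-commutative Galois theory of \cite{Car1}, \cite{Jac1} and \cite{K}. Exactness is essentially formal: one records that $Inn~(\mathbb{K})$ is a normal subgroup of the group of all automorphisms of $\mathbb{K}$, since for $\sigma\in Aut~(\mathbb{K})$ and $g\in\mathbb{K}^{\times}$ formula (\ref{eq2.2}) gives $\sigma\circ h_g\circ\sigma^{-1}=h_{\sigma(g)}$. Hence $\Gamma=G\cap Inn~(\mathbb{K})$ is normal in $G$, the quotient $G/\Gamma$ is a group, and
\begin{equation*}
1\to\Gamma\buildrel\iota\over\to G\to G/\Gamma\to 1
\end{equation*}
is exact with $\iota$ the inclusion. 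The whole content of the theorem therefore lies in the dimension formula.

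Next I would pin down the structure of $\mathbb{B}(\Gamma)$ as a crossed product. For each $\gamma\in\Gamma$ choose $a_\gamma\in\mathbb{K}^{\times}$ with $\gamma=h_{a_\gamma}$. From $h_{a_\gamma a_\delta}=h_{a_\gamma}\circ h_{a_\delta}=h_{a_{\gamma\delta}}$ one sees that $a_\gamma a_\delta$ and $a_{\gamma\delta}$ induce the same inner automorphism of $\mathbb{K}$, hence differ by an element of the center $C$ of $\mathbb{K}$; this produces a $2$-cocycle $\tau\colon\Gamma\times\Gamma\to C^{\times}$ and realizes $\mathbb{B}(\Gamma)=\sum_{\gamma\in\Gamma}C\,a_\gamma$ as a homomorphic image of the twisted group algebra $C^{\tau}[\Gamma]$. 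In particular $\mathbb{B}(\Gamma)$ is a $C$-subalgebra of $\mathbb{K}$ of dimension at most $|\Gamma|$; having no zero divisors and being finite-dimensional over the field $C$, it is a division ring. Moreover, normality of $\Gamma$ in $G$ yields an action of $G$ on $\mathbb{B}(\Gamma)$: for $g\in G$ one has $h_{g(a_\gamma)}=g\circ\gamma\circ g^{-1}\in\Gamma$, so $g(a_\gamma)\in\mathbb{B}(\Gamma)$, while the elements of $\Gamma$ act on $\mathbb{B}(\Gamma)$ by conjugation by its own units, so the induced action of $G$ descends to $G/\Gamma$.

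Finally I would run the dimension count. Using the Galois hypothesis one identifies $\mathbb{B}(\Gamma)$ with the centralizer $C_{\mathbb{K}}(\mathbb{I}_G)$ of the ground field: an inner automorphism $h_a$ fixes $\mathbb{I}_G$ pointwise exactly when $h_a\in Gal~\mathbb{K}=G$, hence exactly when $h_a\in\Gamma$, so the elements of $\mathbb{K}$ commuting with $\mathbb{I}_G$ are precisely the $C$-linear combinations of the $a_\gamma$. The relation between $\dim_C\mathbb{B}(\Gamma)$ and $|G/\Gamma|$ then follows from the dimension bookkeeping of \cite[Theorem~1]{Car1} and \cite[p.~139]{K}, which ties together $[\mathbb{K}:\mathbb{I}_G]$, $[\mathbb{K}:\mathbb{I}_\Gamma]$, $\dim_C\mathbb{B}(\Gamma)$ and the $C$-dimension of the centralizer of $\mathbb{B}(\Gamma)$ in $\mathbb{K}$, and yields $|G/\Gamma|=\dim_C\mathbb{B}(\Gamma)$. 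The step I expect to be the main obstacle is exactly this bookkeeping: it rests on the non-commutative analogue of Artin's theorem, namely $[\mathbb{K}:\mathbb{I}_H]=|H|$ for a finite group $H$ of automorphisms, and on the faithfulness of the outer action induced by $G/\Gamma$; both are established in \cite{Car1}, \cite{Jac1} and \cite{K} via a Dedekind--Artin independence-of-automorphisms argument transported from fields to division rings, and in our proof we merely invoke them.
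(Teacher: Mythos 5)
Your proposal is essentially the paper's own treatment: the paper offers no proof of this theorem, simply attributing it to Cartan's Th\'eor\`eme 1 and Kharchenko (p.~139), and your argument likewise rests the only substantive claim, the identity $|G/\Gamma|=\dim_C\mathbb{B}(\Gamma)$, on exactly those references, while the exactness part reduces to the routine observation $\sigma\circ h_g\circ\sigma^{-1}=h_{\sigma(g)}$, i.e.\ normality of $\Gamma=G\cap Inn~(\mathbb{K})$, which the paper already takes for granted. The extra structural detail you supply (the crossed-product presentation $\mathbb{B}(\Gamma)=\sum_{\gamma}C\,a_\gamma$ and the centralizer identification) is standard and consistent with the cited theory, so the approach is the same, only written out more explicitly.
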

Recall that $\mathbb{B}(\Gamma)$ is a {\it Frobenius algebra} 
{\cyr [Kharchenko} 1996] \cite[Theorem 3.5.1]{K}.  
In other words, there exists a non-degenerate bilinear form
\begin{equation}\label{eq2.5}
Q: ~\mathbb{B}(\Gamma)\times \mathbb{B}(\Gamma)\to C,
\end{equation}
such that $Q(xy,z)=Q(x,yz)$ for all $x,y,z\in \mathbb{B}(\Gamma)$. 
\begin{remark}\label{rmk2.6}
The  form (\ref{eq2.5}) is symmetric if and only if $\mathbb{B}(\Gamma)$
is a commutative ring. 
 \end{remark}
\begin{proof}
$Q(xy,z)=Q(x,yz)=Q(x,zy)=Q(xz,y)=Q(zx,y)=Q(z,xy)$.
\end{proof}

\subsection{Arithmetic topology}
The arithmetic topology studies an interplay between
3-dimensional manifolds and  number fields 
 [Morishita 2012] \cite{M}.  
 Let $\mathfrak{M}^3$ be a category of  closed 3-dimensional manifolds,
such that  the arrows of $\mathfrak{M}^3$ are  homeomorphisms  between the  manifolds.
Likewise, let $\mathbf{K}$ be a category of the algebraic number fields,  where 
the arrows of $\mathbf{K}$ are  isomorphisms between such fields.
 Let  $\mathscr{M}^3\in \mathfrak{M}^3$ be a 3-manifold,  let $S^3\in \mathfrak{M}^3$ be the 3-sphere
 and let $O_K$ be the ring of integers of  $K\in\mathbf{K}$.  
\begin{theorem}\label{thm2.7}
{\bf (\cite[Theorem 1.2]{Nik1})}
The exists a covariant functor $F: \mathfrak{M}^3\to \mathbf{K}$, such that:

\medskip
(i) $F(S^3)=\mathbf{Q}$; 

\smallskip
(ii)  each  ideal $I\subseteq O_K=F(\mathscr{M}^3)$ corresponds to 
a link $\mathscr{Z}\subset\mathscr{M}^3$; 

\smallskip
(iii)  each  prime ideal $I\subseteq O_K=F(\mathscr{M}^3)$ corresponds to
a knot  $\mathscr{K}\subset\mathscr{M}^3$. 
\end{theorem}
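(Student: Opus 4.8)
The plan is to factor $F$ through a noncommutative invariant, in the spirit of \cite{Nik1}; I indicate only the main steps. First I would put each $\mathscr{M}^3\in\mathfrak{M}^3$ into a normal form: by Alexander's theorem it carries an open book decomposition $(B,\pi)$ with page a compact surface $\Sigma$ and monodromy $\phi\in\mathrm{Mod}(\Sigma,\partial\Sigma)$, and --- exactly as in remark \ref{rmk2.1} --- I fix once and for all a canonical such decomposition (say, one of minimal page complexity subordinate to a fixed triangulation). Writing $A_\phi$ for the Perron--Frobenius transition matrix of $\phi$ on an invariant train track, the stationary algebra $\mathbb{A}_{\mathscr{M}^3}=\varinjlim\bigl(\mathbf{Z}^{n}\xrightarrow{A_\phi}\mathbf{Z}^{n}\xrightarrow{A_\phi}\cdots\bigr)$ is an AF-algebra whose ordered group $K_0$, by the Effros--Handelman--Shen realization and Handelman's theory of stationary dimension groups, is an order in the real number field $K=\mathbf{Q}(\lambda_\phi)$, where $\lambda_\phi$ is the dilatation of $\phi$. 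I set $F(\mathscr{M}^3):=O_K$, the ring of integers of $K$. For $S^3$ one uses the open book with disk page and trivial monodromy, so that $\mathbb{A}_{S^3}\cong\mathbf{C}$, $K_0\cong\mathbf{Z}$ and $F(S^3)=\mathbf{Z}$; this is~(i).

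Next, functoriality. A homeomorphism $h\colon\mathscr{M}^3_1\to\mathscr{M}^3_2$ carries the canonical open book of the source to one of the target and, after the canonical reductions, conjugates the two monodromies; hence it intertwines the transition matrices, induces an isomorphism $\mathbb{A}_{\mathscr{M}^3_1}\cong\mathbb{A}_{\mathscr{M}^3_2}$ of AF-algebras, and therefore an isomorphism of the ordered $K_0$-groups, i.e. an isomorphism $F(h)\colon F(\mathscr{M}^3_1)\to F(\mathscr{M}^3_2)$ of the rings of integers (equivalently of the fields). The identities $F(\mathrm{id})=\mathrm{id}$ and $F(h_2\circ h_1)=F(h_2)\circ F(h_1)$ follow from functoriality of $K_0$, so $F$ is a covariant functor $\mathfrak{M}^3\to\mathbf{K}$.

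For (ii) and (iii) I would read the ideal--submanifold dictionary off the inductive system. A link $\mathscr{Z}\subset\mathscr{M}^3$, isotoped so as to be braided about the binding $B$, singles out a $\phi$-invariant (saturated) sublattice $L\subset\mathbf{Z}^{n}$; passing to the limit, $L$ becomes a full $O_K$-submodule of $K$, i.e. a fractional ideal, and after clearing denominators an ideal $I\subseteq O_K$. Conversely every ideal of $O_K$ is realized in this way, since every $O_K$-submodule of $K$ is a limit of such invariant sublattices, which are in turn realized by braided links. One then checks that the number of connected components of $\mathscr{Z}$ equals the number of prime factors of $I$ --- indecomposability of the invariant sublattice corresponding to connectedness of the braided link --- so that a knot corresponds to a prime ideal; this gives (ii) and (iii).

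I expect the essential difficulty to be \emph{well-definedness and rigidity}: one must show that $K$, and the whole link--ideal correspondence, depend only on the homeomorphism type of $\mathscr{M}^3$ and not on the auxiliary data (open book, triangulation, train track) beyond the canonical choices, and that homeomorphic $3$-manifolds yield genuinely isomorphic --- not merely numerically indistinguishable --- rings of integers. This forces one to control the non-uniqueness of open books via the stabilization moves relating any two of them, and to invoke Handelman's rigidity for ordered abelian groups of rank $\ge 2$, by which a stationary dimension group recovers its defining Perron number, hence its field, up to isomorphism. Granting this, properties (i)--(iii) follow by the bookkeeping of \cite{Nik1} and \cite{M}.
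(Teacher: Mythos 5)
There is nothing in this paper to compare your argument with: Theorem \ref{thm2.7} is imported verbatim from \cite[Theorem 1.2]{Nik1} and no proof is given here. The only construction the paper records is the one in Theorem \ref{thm2.8} (quoted from \cite{Nik2}): $F$ is obtained from a finite index subgroup $N$ of a surface mapping class group via $\mathscr{M}^3_N\mapsto \widehat{N}/\widehat{\mathbf{Z}}\cong G_K$ and then via Uchida's theorem \cite{Uch1} recovering $K$ from $G_K$. Your route --- open books, train tracks, stationary AF-algebras and their dimension groups --- is closer in spirit to the construction in \cite{Nik1} itself, but as written it does not constitute a proof, and the places where it fails are exactly the ones you flag and then defer.

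Concretely: (1) the field $K=\mathbf{Q}(\lambda_\phi)$ manifestly depends on the open book, not only on $\mathscr{M}^3$; every $3$-manifold carries infinitely many open books (stabilization changes the page and the monodromy, hence the dilatation and in general the field), and your ``canonical choice of minimal page complexity subordinate to a fixed triangulation'' is neither known to be unique nor preserved by an arbitrary homeomorphism $h$, which will not respect the fixed triangulation --- so the functoriality paragraph does not go through; invoking Handelman's rigidity only tells you that the stationary dimension group determines its Perron number, not that two different open books of the same manifold give the same one. (2) The monodromy of an open book need not be pseudo-Anosov (it is trivial for $S^3$, periodic or reducible for lens spaces, torus bundles, graph manifolds), so the Perron--Frobenius matrix $A_\phi$ and the dilatation $\lambda_\phi$ simply do not exist in general; setting $\mathbb{A}_{S^3}\cong\mathbf{C}$ by fiat does not repair this, and item (i) is therefore not derived but decreed. (3) The dictionary in (ii)--(iii) is asserted rather than constructed: you give no mechanism by which a braided link produces a $\phi$-invariant sublattice of $\mathbf{Z}^n$, no argument that every ideal of $O_K$ arises this way, and no argument that connectedness of the link corresponds to primality of the ideal. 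Since the burden of Theorem \ref{thm2.7} is precisely well-definedness, functoriality and this dictionary, the proposal as it stands is an outline of what would have to be proved, not a proof; if you want to work within this paper's framework, the more promising route is the profinite one of Theorem \ref{thm2.8}, where the independence of auxiliary choices is absorbed into the isomorphism class of $\widehat{N}/\widehat{\mathbf{Z}}$ and the field is recovered by Uchida's theorem.
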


\bigskip
The following construction of  $F$ is based on [Uchida 1976] \cite{Uch1}. 
 Let $X_{g,n}$ be an orientable surface of genus $g\ge 0$  with $n\ge 0$ boundary 
 components. Denote by $Mod ~X_{g,n}$ the mapping class group of
 $X_{g,n}$,  i.e. a group of isotopy classes of the orientation and boundary-preserving 
 diffeomorphisms of the surface $X_{g,n}$.  Let $N$ be a finite index  subgroup of  $Mod ~X_{g,n}$.
We omit the construction of a 3-manifold $\mathscr{M}^3_N\in \mathfrak{M}^3$ from  $N$ referring
 the reader to  \cite[Remark 1.1]{Nik1}.
As usual,  let $G_K$ be  the absolute Galois
group of  $K\in \mathbf{K}$, while $\widehat{N}$ and $\widehat{\mathbf{Z}}$ denote the profinite completion of 
the groups $N$ and $\mathbf{Z}$, respectively. 
Let $G_{K}\mapsto K$ be an injective map defined
in [Uchida 1976] \cite[Corollary 2]{Uch1}. 
\begin{theorem}\label{thm2.8}
{\bf (\cite[Theorem 1.2]{Nik2})}
The map:
\begin{equation}\label{eq2.6}
\mathscr{M}^3_N\mapsto \widehat{N} / \widehat{\mathbf{Z}} ~\cong  G_{K}\mapsto K 
\end{equation}
coincides with the functor $F$ of theorem \ref{thm2.7}. Such a  functor is injective, unless $N$ and $N'$ are
a Grothendieck pair. 
Moreover, for every normal  finite index subgroup $N'\subseteq N$,  there exists 
a  regular  cover $\mathscr{M}^3_{N'}$ of $\mathscr{M}^3_N$ and 
an intermediate  field $K'=F(\mathscr{M}^3_{N'})$, such that $K\subseteq K'$ and 
$Gal~(K'|K)\cong N/N'$. 
\end{theorem}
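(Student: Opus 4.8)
The plan is to treat the three assertions of the theorem in turn: first that the composite (\ref{eq2.6}) is a covariant functor coinciding with the $F$ of Theorem \ref{thm2.7}, second its injectivity up to Grothendieck pairs, and third the Galois correspondence along a normal tower. The structural heart on which everything rests is the middle identification $\widehat{N}/\widehat{\mathbf{Z}}\cong G_K$: the plan is to show that the profinite group $\widehat{N}/\widehat{\mathbf{Z}}$ is realized as the absolute Galois group of a global field $K$, by checking that it carries the decomposition and local data axiomatizing such groups, so that the reconstruction $G_K\mapsto K$ of [Uchida 1976] \cite[Corollary 2]{Uch1} applies. Granting this, functoriality of (\ref{eq2.6}) is routine: a homeomorphism $\mathscr{M}^3_N\to\mathscr{M}^3_{N'}$ induces an isomorphism of the attached profinite groups $\widehat{N}/\widehat{\mathbf{Z}}\cong\widehat{N'}/\widehat{\mathbf{Z}}$, these being invariants of the manifolds via their construction, and hence, by Uchida, an isomorphism $K\cong K'$.

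To show that this functor coincides with the $F$ of Theorem \ref{thm2.7}, I would verify its three characterizing properties. For (i) one computes the base case, in which $N=Mod~X_{g,n}$ gives $\mathscr{M}^3_N=S^3$ and $\widehat{N}/\widehat{\mathbf{Z}}$ the absolute Galois group of $\mathbf{Q}$, so that $O_K=\mathbf{Z}$. For (ii) and (iii) I would match the ideals and prime ideals of $O_K$ against the arithmetic-topology dictionary that assigns links to ideals and knots to prime ideals, exactly as in \cite{Nik1}. Since both maps are functorial and agree on this generating data, they coincide.

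For injectivity, suppose $F(\mathscr{M}^3_N)\cong F(\mathscr{M}^3_{N'})$, i.e. $K\cong K'$. By Uchida's theorem a field isomorphism $K\cong K'$ is equivalent to a topological-group isomorphism $G_K\cong G_{K'}$, hence to $\widehat{N}/\widehat{\mathbf{Z}}\cong\widehat{N'}/\widehat{\mathbf{Z}}$. Because $\widehat{\mathbf{Z}}$ is the closure of a canonical central subgroup of the mapping class group, it is characteristic, and the plan is to lift the isomorphism of quotients to an isomorphism $\widehat{N}\cong\widehat{N'}$. At this point either $N\cong N'$, whence $\mathscr{M}^3_N\cong\mathscr{M}^3_{N'}$ by reversing the construction and $F$ is injective, or $N\not\cong N'$ with $\widehat{N}\cong\widehat{N'}$, which is by definition a Grothendieck pair; this is precisely the stated exceptional case. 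The converse direction is immediate from functoriality.

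For the Galois correspondence, let $N'\subseteq N$ be normal of finite index. Covering-space theory applied to the construction of the three-manifolds yields a regular cover $\mathscr{M}^3_{N'}\to\mathscr{M}^3_N$ with deck group $N/N'$, and $K':=F(\mathscr{M}^3_{N'})$ is the attached field. On the arithmetic side the inclusion $N'\subseteq N$ gives a closed normal subgroup $\widehat{N'}/\widehat{\mathbf{Z}}$ of $\widehat{N}/\widehat{\mathbf{Z}}=G_K$; by the Galois correspondence for the absolute Galois group this is $G_{K'}$ for an intermediate field $K\subseteq K'$, with $Gal~(K'|K)\cong(\widehat{N}/\widehat{\mathbf{Z}})/(\widehat{N'}/\widehat{\mathbf{Z}})\cong\widehat{N}/\widehat{N'}$. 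Since the closure of a finite-index subgroup is its own profinite completion and finite quotients are preserved, $\widehat{N}/\widehat{N'}\cong N/N'$, giving $Gal~(K'|K)\cong N/N'$ as claimed. The main obstacle throughout is the identification $\widehat{N}/\widehat{\mathbf{Z}}\cong G_K$ together with the lift through $\widehat{\mathbf{Z}}$ in the injectivity step: one must both realize the topological group as an absolute Galois group in the precise class to which Uchida's reconstruction applies, and justify that the central extension by $\widehat{\mathbf{Z}}$ is rigid enough that an isomorphism of the quotients $G_K$ determines $\widehat{N}$ up to isomorphism.
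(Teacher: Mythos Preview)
The paper does not prove Theorem \ref{thm2.8}. It is stated in the Preliminaries section with the attribution ``\textbf{(\cite[Theorem 1.2]{Nik2})}'' and no argument is given here; the result is imported wholesale from the author's earlier preprint. There is therefore nothing in this paper to compare your proposal against.

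On its own merits, your outline is a plausible roadmap, and you are candid about where the real content lies: the identification $\widehat{N}/\widehat{\mathbf{Z}}\cong G_K$ and the lift through $\widehat{\mathbf{Z}}$ in the injectivity step. But as written these are assertions, not arguments. Saying that one will ``check that it carries the decomposition and local data axiomatizing such groups'' is a description of a strategy, not a proof; the Neukirch--Uchida machinery applies to groups already known to be absolute Galois groups of number fields, and showing that an abstractly given profinite quotient of a mapping-class-group completion falls into that class is precisely the non-trivial input. Likewise, your claim that $\widehat{\mathbf{Z}}$ sits as a \emph{characteristic} subgroup so that an isomorphism of quotients lifts to $\widehat{N}\cong\widehat{N'}$ needs justification: characteristic in which ambient group, and why does that force the extension to be rigid? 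Without these two points your sketch reduces to ``if the hard steps hold, the formal consequences follow,'' which is true but not yet a proof. If you want to pursue this, you would need to consult \cite{Nik2} directly for the actual argument.
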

\begin{remark}
The map (\ref{eq2.6}) extends to  the 4-manifolds $\mathscr{M}^4$ and 
 non-commutative fields  $\mathbb{K}$, see (\ref{eq1.1}). 
Such an extension is at the heart  of our paper. 
\end{remark}

\section{Proofs}
\subsection{Proof of theorem \ref{thm1.1}}
For the sake brevity, we  shall focus on the first part by proving that  $F: \mathfrak{M}^4\to\mathfrak{K}$ is a covariant functor,
while referring the reader to \cite[Section 4]{Nik3} for the proof of items (i)-(iii) of theorem \ref{thm1.1}. 
We shall  prove $F$ to be a functor with respect to homeomorphisms first, and then generalize to arbitrary
differentiable mappings, see remark \ref{rmk3.4}.

\subsubsection{}
 Let us  show that $F: \mathfrak{M}^4\to\mathfrak{K}$ is a covariant functor. 
For clarity, we split the proof in a series of lemmas.
\begin{lemma}\label{lm3.1}
The manifolds $\mathscr{M}^4_N, \mathscr{M}^4_{N'}\in \mathfrak{M}^4$ 
are homeomorphic, if and only if, the subgroups $N, N'\subseteq Mod ~\mathscr{M}^3$ are isomorphic.
\end{lemma}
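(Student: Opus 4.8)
The plan is to establish the equivalence by a two-sided argument, using the branched-cover description of $\mathscr{M}^4_N$ from Section 2.1. Recall that $\mathscr{M}^4_N$ is by definition the cover of the fixed manifold $\mathscr{W}^4$ corresponding to the finite-index subgroup $N\subseteq\pi_1(\mathscr{W}^4)\cong Mod~\mathscr{M}^3$. The ``only if'' direction is the genuinely substantive one; the ``if'' direction I expect to follow more formally from the covering-space dictionary together with the normalization built into the category $\mathfrak{M}^4$.

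First I would treat the ``if'' direction. Suppose $N\cong N'$ as subgroups of $Mod~\mathscr{M}^3$. Since $\mathscr{M}^4_N$ and $\mathscr{M}^4_{N'}$ are the covers of the one fixed base $\mathscr{W}^4$ associated to these subgroups of $\pi_1(\mathscr{W}^4)$, an isomorphism $N\xrightarrow{\sim}N'$ of subgroups induces, by the classification of covering spaces, a homeomorphism of the corresponding covers over $\mathscr{W}^4$; the orientability and the PL (hence smooth) structure are preserved because they are pulled back from $\mathscr{W}^4$ along the covering maps, which themselves factor through the branched cover $\mathscr{W}^4\to S^4$ of Piergallini. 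Thus $\mathscr{M}^4_N\cong\mathscr{M}^4_{N'}$, and by construction of $\mathfrak{M}^4$ these are legitimate objects with homeomorphisms as arrows.

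For the ``only if'' direction, suppose $\mathscr{M}^4_N$ and $\mathscr{M}^4_{N'}$ are homeomorphic. The key step is to recover the conjugacy/isomorphism class of $N$ inside $Mod~\mathscr{M}^3$ from the homeomorphism type of the cover. Here I would use the fundamental group: a homeomorphism $\mathscr{M}^4_N\to\mathscr{M}^4_{N'}$ induces an isomorphism $\pi_1(\mathscr{M}^4_N)\cong\pi_1(\mathscr{M}^4_{N'})$. Since $\mathscr{M}^4_N$ is the cover of $\mathscr{W}^4$ corresponding to $N$, we have $\pi_1(\mathscr{M}^4_N)\cong N$ (the covering projection embeds $\pi_1$ of the cover as the chosen subgroup), and similarly $\pi_1(\mathscr{M}^4_{N'})\cong N'$; hence $N\cong N'$. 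One must also invoke the hypothesis that $\mathfrak{M}^4$ contains no Grothendieck pair $\{\mathscr{M}^4_N,\mathscr{M}^4_{N'}\}$, which is what prevents the pathological situation in which $\widehat{N}\cong\widehat{N'}$ but $N\not\cong N'$ --- i.e. it guarantees that the abstract group isomorphism we extract is the ``honest'' one and matches the way the later functor $F$ distinguishes manifolds via profinite completions.

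The main obstacle I anticipate is the delicate bookkeeping around whether $\pi_1$ of a branched cover sees $N$ on the nose or only up to conjugacy, and whether the ambient branch locus $X_g$ (which Remark 2.1 warns is non-canonical) interferes --- this is why the author fixes a canonical choice of $\iota$ and $X_g$. A secondary subtlety is that a homeomorphism between the two 4-manifolds need not a priori respect the covering-map structure down to $\mathscr{W}^4$; the resolution is that $\pi_1$ is a homeomorphism invariant regardless, so one does not need the homeomorphism to be a map over $\mathscr{W}^4$, only to identify the groups $N$ and $N'$ abstractly. Once that identification is in hand, together with the Grothendieck-pair exclusion, the lemma follows.
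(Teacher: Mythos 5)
Your ``only if'' direction is essentially the paper's argument: a homeomorphism induces $\pi_1(\mathscr{M}^4_N)\cong\pi_1(\mathscr{M}^4_{N'})$, and since the covering projections embed these fundamental groups as the subgroups $N$ and $N'$ (injectivity of $p_*$, $p_*'$ into $\pi_1(S^4-X_g)$, respectively $\pi_1(\mathscr{W}^4)$), one concludes $N\cong N'$; the canonical choice of $\iota$ and $X_g$ from Remark 2.1 is exactly the normalization both you and the paper lean on. Note, however, that your appeal to the no-Grothendieck-pair hypothesis is out of place here: that hypothesis plays no role in this lemma (it concerns profinite completions and is used only in the subsequent lemma relating $\widehat{N}/\widehat{\mathbf{Z}}$ to $G_{\mathbb{K}}$), so invoking it signals a misplacement rather than an error in the conclusion.

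The genuine gap is in your ``if'' direction. You pass from an abstract isomorphism $N\cong N'$ to a homeomorphism of the corresponding covers ``by the classification of covering spaces,'' but that classification identifies covers of a fixed base up to equivalence with \emph{conjugacy classes} of subgroups of the fundamental group, not with abstract isomorphism classes. Two finite-index subgroups that are abstractly isomorphic need not be conjugate, and the associated covers need not be equivalent over the base --- nor even obviously homeomorphic --- without a further argument; this is precisely the substantive point of the sufficiency direction, and your proposal skips it. The paper's own proof inserts an explicit bridge: it claims that isomorphic subgroups $N, N'\subset\pi_1(S^4-X_g)$ are conjugate, i.e. $N'=\mathbf{g}^{-1}N\mathbf{g}$ for some $\mathbf{g}$, and then uses the standard fact that conjugate subgroups give equivalent branched covers, with the homeomorphism realized by a deck transformation. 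Whatever one thinks of that conjugacy claim, it is the step your argument needs and does not supply; as written, your appeal to covering-space theory does not apply to a mere abstract isomorphism of subgroups.
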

\begin{proof}
(i)  Let $\mathscr{M}^4_N$ be homeomorphic to  $\mathscr{M}^4_{N'}$
by a homeomorphism $h:  \mathscr{M}^4_N\to \mathscr{M}^4_{N'}$. 
Since both $\mathscr{M}^4_N$ and $\mathscr{M}^4_{N'}$ cover 
the 4-sphere $S^4$ branched over a surface $X_g\hookrightarrow S^4$,
one gets a commutative diagram in Figure 1. 
By definition, $N\cong p_*(\pi_1(\mathscr{M}^4_N))\subset \pi_1(S^4-X_g)$. 
Likewise, $N'\cong p_*'(\pi_1(\mathscr{M}^4_{N'}))\subset \pi_1(S^4-X_g)$. 
Since $\mathscr{M}^4_N$ is homeomorphic to $\mathscr{M}^4_{N'}$,
one gets an isomorphism of the fundamental groups 
$\pi_1(\mathscr{M}^4_N)\cong \pi_1(\mathscr{M}^4_{N'})$. 
Because the maps $p_*$ and $p_*'$ are injective, we conclude that
$N$ and  $N'$ are isomorphic subgroups of $\pi_1(S^4-X_g)$ and, therefore, of 
the group $Mod ~\mathscr{M}^3$. In view of the above and remark \ref{rmk2.1},
the necessary condition of  lemma \ref{lm3.1}
is proved. 

\begin{figure}
\begin{picture}(300,90)(-90,10)
\put(23,75){\vector(0,-1){35}}
\put(70,75){\vector(-1,-1){35}}
\put(40,83){\vector(1,0){30}}
\put(8,25){$S^4-X_g$}
\put(17,80){$\mathscr{M}^4_N$}
\put(75,80){$\mathscr{M}^4_{N'}$}
\put(50,90){$h$}
\put(10,55){$p$}
\put(65,55){$p'$}
\end{picture}
\caption{Branched covers of $S^4$.}
\end{figure}

\medskip
(ii)  Let $N\cong N'$ be isomorphic  subgroups of 
$Mod ~\mathscr{M}^3$. As explained in Section 2.1, the groups $N$ and $N'$ define a pair of 
branched  covers $\mathscr{M}^4_N$ and $\mathscr{M}^4_{N'}$ of $S^4$. 
In view of the diagram in Figure 1,   one obtains an inclusion of groups $N, N'\subset \pi_1(S^4-X_g)$.
Since $N\cong N'$, the subgroups $N$ and $N'$ are conjugate in the group $\pi_1(S^4-X_g)$.
In other words, there exists an element $\mathbf{g}\in  \pi_1(S^4-X_g)$, such that
$N'=\mathbf{g}^{-1}N\mathbf{g}$. It is well known, that the conjugate subgroups of 
$\pi_1(S^4-X_g)$ correspond to the homeomorphic branched covers $\mathscr{M}^4_N$ and $\mathscr{M}^4_{N'}$
 of the $S^4$.  Moreover, the homeomorphism $h: \mathscr{M}^4_N\to \mathscr{M}^4_{N'}$
 is realized by a deck transformation of the branched cover. 
 In view of the above and remark \ref{rmk2.1}, 
the sufficient condition of  lemma \ref{lm3.1}
is proved. 
\end{proof}

\begin{lemma}\label{lm2}
Up to the Grothendieck pairs, the subgroups $N$ and $N'$
of $Mod ~\mathscr{M}^3$
 are isomorphic, if and only if, 
the  groups  $\widehat{N} / \widehat{\mathbf{Z}}\cong G_{\mathbb{K}}$
and  $\widehat{N'} / \widehat{\mathbf{Z}}\cong G_{\mathbb{K}'}$ are isomorphic. 
\end{lemma}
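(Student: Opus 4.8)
The plan is to prove Lemma \ref{lm2} by reducing it to the corresponding 3-dimensional statement already contained in Theorem \ref{thm2.8}. First I would observe that the only content beyond pure group theory is the passage $\widehat{N}\mapsto \widehat{N}/\widehat{\mathbf{Z}}\cong G_{\mathbb{K}}$ and the fact, recalled from Uchida's theorem (remark \ref{rmk2.4}, to be proved as lemma \ref{lm3.3}), that the profinite group $G_{\mathbb{K}}$ determines $\mathbb{K}$ up to isomorphism; conversely an isomorphism $\mathbb{K}\cong\mathbb{K}'$ of non-commutative fields induces an isomorphism $G_{\mathbb{K}}\cong G_{\mathbb{K}'}$ of absolute Galois groups. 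So the lemma will follow once we know that, modulo Grothendieck pairs, $N\cong N'$ if and only if $\widehat{N}/\widehat{\mathbf{Z}}\cong\widehat{N'}/\widehat{\mathbf{Z}}$.

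For the forward direction, suppose $N\cong N'$ as subgroups of $Mod~\mathscr{M}^3$. Then their profinite completions satisfy $\widehat{N}\cong\widehat{N'}$ functorially, and since the central copy $\widehat{\mathbf{Z}}\subseteq\widehat{N}$ is the image of the canonical central $\mathbf{Z}$ coming from the branched-cover construction (the same recipe that produces the 3-manifolds $\mathscr{M}^3_N$ in Theorem \ref{thm2.8}), the isomorphism $\widehat{N}\cong\widehat{N'}$ carries $\widehat{\mathbf{Z}}$ to $\widehat{\mathbf{Z}}$, hence descends to an isomorphism of quotients $\widehat{N}/\widehat{\mathbf{Z}}\cong\widehat{N'}/\widehat{\mathbf{Z}}$, i.e. $G_{\mathbb{K}}\cong G_{\mathbb{K}'}$. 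For the reverse direction, an isomorphism $\widehat{N}/\widehat{\mathbf{Z}}\cong\widehat{N'}/\widehat{\mathbf{Z}}$ together with the central extension $1\to\widehat{\mathbf{Z}}\to\widehat{N}\to\widehat{N}/\widehat{\mathbf{Z}}\to 1$ (and likewise for $N'$) yields $\widehat{N}\cong\widehat{N'}$: one checks the two central extensions of $\widehat{N}/\widehat{\mathbf{Z}}$ by $\widehat{\mathbf{Z}}$ have the same (trivial, coming from the geometric splitting) class in $H^2$, so their total groups agree. Thus $\widehat{N}\cong\widehat{N'}$, and by the definition of the category $\mathfrak{M}^4$ — which contains no Grothendieck pair — the hypothesis "up to Grothendieck pairs'' forces $N\cong N'$. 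Finally, applying Uchida's recognition theorem to $G_{\mathbb{K}}\cong G_{\mathbb{K}'}$ gives the equivalence with $\mathbb{K}\cong\mathbb{K}'$, closing the circle.

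The main obstacle, I expect, is justifying that the embedded $\widehat{\mathbf{Z}}$ is \emph{canonical} inside $\widehat{N}$, i.e. is preserved by every abstract isomorphism $\widehat{N}\cong\widehat{N'}$ rather than merely by geometric ones. In the 3-dimensional setting of Theorem \ref{thm2.8} this $\mathbf{Z}$ is pinned down by the monodromy of the branched cover over $S^3$; here it is the pullback of that same central $\mathbf{Z}$ along the additional 4-dimensional branched-cover map $\mathscr{W}^4\to S^4$ of Section 2.1, so I would argue that $\widehat{\mathbf{Z}}$ is characterized group-theoretically (for instance as a maximal central pro-cyclic subgroup, or via the center of $\widehat{N}$) and therefore respected by any isomorphism. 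The remaining steps — the $H^2$ comparison of central extensions and the invocation of Uchida's theorem — are formal once this canonicity is in hand.
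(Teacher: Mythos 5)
Your overall route is the same as the paper's: the forward direction rests on functoriality of the profinite completion ($N\cong N'$ gives $\widehat{N}\cong\widehat{N'}$), the reverse direction on the exclusion of Grothendieck pairs built into the category $\mathfrak{M}^4$, the identification $G_{\mathbb{K}}\cong\widehat{N}/\widehat{\mathbf{Z}}$ is quoted from \cite{Nik2}, and the recognition of $\mathbb{K}$ from $G_{\mathbb{K}}$ is deferred to lemma \ref{lm3.3} -- exactly as in the paper (the paper argues the converse by contraposition rather than directly, but that is only stylistic).

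The genuine problem lies in the extra machinery you lean on to pass between $\widehat{N}$ and $\widehat{N}/\widehat{\mathbf{Z}}$. You assert (a) that $\widehat{\mathbf{Z}}$ sits centrally in $\widehat{N}$ and is characteristic (e.g.\ ``a maximal central pro-cyclic subgroup''), so that any abstract isomorphism $\widehat{N}\cong\widehat{N'}$ respects it, and (b) that the central extension $1\to\widehat{\mathbf{Z}}\to\widehat{N}\to G_{\mathbb{K}}\to 1$ has trivial class in $H^2$, so that an isomorphism of the quotients lifts to $\widehat{N}\cong\widehat{N'}$. Neither claim is established: nothing in the paper or in your argument shows that $\widehat{\mathbf{Z}}$ is central in $\widehat{N}$; a maximal central procyclic subgroup, if it exists, need not be unique nor coincide with the given $\widehat{\mathbf{Z}}$; no ``geometric splitting'' of the extension is exhibited, so the $H^2$ comparison is an assertion rather than a proof; and even granting centrality you would still have to check that the two extension classes correspond under the chosen isomorphism of quotients before concluding $\widehat{N}\cong\widehat{N'}$. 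For comparison, the paper does not attempt any of this: its proof passes formally from $\widehat{N}\cong\widehat{N'}$ to isomorphism of the quotients and, in the converse, from $\widehat{N}\not\cong\widehat{N'}$ to non-isomorphism of the quotients, treating the identification $\widehat{N}/\widehat{\mathbf{Z}}\cong G_{\mathbb{K}}$ of \cite{Nik2} as carrying the needed compatibility. So you have correctly located the delicate point that the paper leaves implicit, but your proposed fixes would not survive scrutiny as written; to complete your version you would need to extract from \cite{Nik2} how $\widehat{\mathbf{Z}}$ actually sits inside $\widehat{N}$ and prove its invariance (or the lifting statement) from that description, rather than postulate centrality and a split extension.
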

\begin{proof}
(i) Let $N\cong N'$ be a pair of isomorphic subgroups of  $Mod ~\mathscr{M}^3$. 
Recall that a profinite group $\widehat{N}$ is a topological group defined  by the inverse limit
\begin{equation}\label{eq3.1}
\widehat{N}:=\varprojlim N/N_k,
\end{equation}
where $N_k$ runs through all  open normal finite index subgroups of 
$N$.   It follows from (\ref{eq3.1}), that  if $N\cong N'$, then $\widehat{N}\cong \widehat{N'}$. 
Since  $G_{\mathbb{K}}\cong \widehat{N} / \widehat{\mathbf{Z}}$
and  $G_{\mathbb{K}'}\cong \widehat{N'} / \widehat{\mathbf{Z}}$, we conclude that 
 $G_{\mathbb{K}}\cong G_{\mathbb{K}'}$.
The necessary condition of lemma \ref{lm2} is proved.

\medskip
(ii)  Suppose that  $G_{\mathbb{K}}\cong\widehat{N} / \widehat{\mathbf{Z}}$ and $G_{\mathbb{K}'}\cong\widehat{N'} / \widehat{\mathbf{Z}}$ 
are isomorphic groups.  Let us show, that $N\cong N'$ are isomorphic subgroups of $Mod ~\mathscr{M}^3$. 
To the contrary, let $N\not\cong N'$.  Since $N$ and $N'$ cannot be a Grothendieck pair,  we conclude that 
$\widehat{N}\not\cong\widehat{N'}$.
But  $G_{\mathbb{K}}\cong\widehat{N} / \widehat{\mathbf{Z}}$
and  $G_{\mathbb{K}'}\cong\widehat{N'} / \widehat{\mathbf{Z}}$ and, therefore, 
 $G_{\mathbb{K}}\not\cong G_{\mathbb{K}'}$.  One gets a contradiction
 proving the sufficient condition of lemma \ref{lm2}. 
\end{proof}

\begin{lemma}\label{lm3.3}
The absolute Galois groups  $G_{\mathbb{K}}$ and $G_{\mathbb{K}'}$
are isomorphic if and only if the the underlying non-commutative fields $\mathbb{K}$
and $\mathbb{K}'$ are isomorphic. 
\end{lemma}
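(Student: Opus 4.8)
The plan is to adapt Uchida's theorem from the commutative to the non-commutative setting, as flagged in Remark \ref{rmk2.4}. The forward direction is trivial: an isomorphism $\phi\colon\mathbb{K}\to\mathbb{K}'$ of non-commutative fields extends to an isomorphism of algebraic closures $\bar{\mathbb{K}}\to\bar{\mathbb{K}'}$, and conjugation by this extension carries $G_{\mathbb{K}}=\mathrm{Gal}(\bar{\mathbb{K}}|\mathbb{K})$ isomorphically onto $G_{\mathbb{K}'}=\mathrm{Gal}(\bar{\mathbb{K}'}|\mathbb{K}')$, so $G_{\mathbb{K}}\cong G_{\mathbb{K}'}$. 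The substance is the converse: from a topological isomorphism $\psi\colon G_{\mathbb{K}}\to G_{\mathbb{K}'}$ of profinite groups I must reconstruct an isomorphism of the fields.

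First I would fix a decomposition of the absolute Galois group that separates the inner part from the outer part. By the theorem of Cartan--Kharchenko quoted above, any finite Galois layer $\mathbb{L}|\mathbb{K}$ sits in a short exact sequence $1\to\Gamma\to G\to G/\Gamma\to 1$ with $\Gamma=G\cap\mathrm{Inn}(\mathbb{K})$ the inner automorphisms and $|G/\Gamma|=\dim_C\mathbb{B}(\Gamma)$; passing to the inverse limit over all such layers gives a closed normal subgroup $I_{\mathbb{K}}\triangleleft G_{\mathbb{K}}$ (the inertia-by-inner subgroup) with $G_{\mathbb{K}}/I_{\mathbb{K}}$ the "outer" Galois group. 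An abstract isomorphism $\psi$ of profinite groups need not carry $I_{\mathbb{K}}$ to $I_{\mathbb{K}'}$ a priori, so I would characterize $I_{\mathbb{K}}$ group-theoretically — e.g. as the subgroup generated by all closed normal subgroups on which the conjugation action factors through a finite group of inner type, detected via the Frobenius-algebra structure on the associated group rings $\mathbb{B}(\Gamma)$ and Remark \ref{rmk2.6}. Once $\psi(I_{\mathbb{K}})=I_{\mathbb{K}'}$, the induced map $\bar\psi\colon G_{\mathbb{K}}/I_{\mathbb{K}}\to G_{\mathbb{K}'}/I_{\mathbb{K}'}$ is an isomorphism of the outer Galois groups, and these are the absolute Galois groups of the centers $C=C(\mathbb{K})$ and $C'=C(\mathbb{K}')$, which are honest (commutative) number fields.

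Next I would invoke the classical result: Uchida's theorem \cite[Corollary 2]{Uch1} (together with the Neukirch--Uchida theorem in the arithmetic case) says that an isomorphism $G_C\cong G_{C'}$ of absolute Galois groups of number fields is induced by a unique isomorphism $C\cong C'$ of the fields themselves. This recovers the center. To promote this to an isomorphism of the full non-commutative fields, I would use that $\mathbb{K}$ is determined over its center $C$ by the local data of the inner part: for each finite inner layer the Frobenius form $Q$ on $\mathbb{B}(\Gamma)$ records the multiplication, and compatibility of $\psi$ with these forms — which follows from $\psi$ being a group isomorphism respecting the exact sequences — lets one assemble a ring isomorphism $\mathbb{B}_{\mathbb{K}}\to\mathbb{B}_{\mathbb{K}'}$ over $C\cong C'$, hence an isomorphism $\mathbb{K}\cong\mathbb{K}'$ of the completed objects by Skolem--Noether-type rigidity of finite-dimensional division algebras over a number field. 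The main obstacle is exactly the first step of the converse: showing that an abstract topological isomorphism of profinite groups must respect the inner-automorphism subgroup $I_{\mathbb{K}}$, i.e. that $I_{\mathbb{K}}$ is intrinsically definable from the profinite group $G_{\mathbb{K}}$ alone; everything downstream is then a matter of combining Uchida with the Cartan--Kharchenko Galois correspondence and standard rigidity of central simple algebras.
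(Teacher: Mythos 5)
Your route diverges from the paper's: the paper follows Uchida's scheme directly, realizing $G_{\mathbb{K}}$ and $G_{\mathbb{K}'}$ as open subgroups of a common absolute Galois group $G$ and arguing that the topological isomorphism $\sigma$ extends to an inner automorphism of $G$, assembled as a limit of extensions of the finite-level isomorphisms $\sigma_N\colon G_{\mathbb{K}}/N\to G_{\mathbb{K}'}/N$ over open normal subgroups $N\subseteq G$; no splitting into an ``inner part'' and an ``outer part'' occurs there. Your d\'evissage through $I_{\mathbb{K}}$ has a genuine gap at exactly the point you flag: you never show that an abstract topological isomorphism $\psi$ satisfies $\psi(I_{\mathbb{K}})=I_{\mathbb{K}'}$, i.e.\ that the closed subgroup assembled from the inner layers $\Gamma=G\cap Inn~(\mathbb{K})$ is definable from the profinite group alone. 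The proposed detection ``via the Frobenius-algebra structure on $\mathbb{B}(\Gamma)$ and remark \ref{rmk2.6}'' is not a group-theoretic criterion: $\mathbb{B}(\Gamma)$ and its form $Q$ are constructed from the field $\mathbb{K}$, which is precisely what is not yet available on the target side of $\psi$. Since the reduction to Neukirch--Uchida for the centers and the subsequent reassembly of $\mathbb{K}$ are all conditional on this step, the converse direction is not established by your argument.

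The final step is also incorrect as stated. Skolem--Noether says that automorphisms of a central simple algebra fixing the center are inner; it does not say that two division algebras with isomorphic centers are isomorphic. Over a number field $C$ there are many pairwise non-isomorphic division algebras with center $C$, distinguished by their local Hasse invariants, so recovering $C\cong C'$ together with the dimensions $|G/\Gamma|=\dim_C\mathbb{B}(\Gamma)$ of the layer algebras does not pin down $\mathbb{K}$; you would need to show that the data transported by $\psi$ determines the Brauer class, and no argument for that is given. Moreover, in the paper's setting $\mathbb{K}$ need not be finite-dimensional over its center, so the central-simple-algebra rigidity you invoke may not apply at all. The paper's (admittedly terse) proof sidesteps both difficulties by never separating inner from outer automorphisms: it transports $\sigma$ level by level to inner automorphisms of the finite quotients $G/N$ and passes to the limit, exactly as in \cite{Uch1}.
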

\begin{proof}
The proof is an adaption of the argument of  [Uchida 1976] \cite[Corollary 2]{Uch1}
to the case of non-commutative fields. Namely, we introduce a topology on 
 $G_{\mathbb{K}}$  according to the formula (\ref{eq3.1}). 
 Let $G_{\mathbb{K}}$ and  $G_{\mathbb{K}'}$ be open subgroups of an absolute Galois group $G$,
 and let $\sigma: G_{\mathbb{K}}\to G_{\mathbb{K}'}$ be a topological isomorphism. 
 To prove lemma \ref{lm3.3},  it is enough to show that $\sigma$ 
 can be extended to an inner automorphism of   $G$,
 which corresponds   an isomorphism   $\mathbb{K}\cong\mathbb{K}'$. 
 Indeed, one takes an open normal subgroup $N$ of $G$  contained in 
  $G_{\mathbb{K}}$ and  $G_{\mathbb{K}'}$.  The group $N$ induces
  an isomorphism $\sigma_N: G_{\mathbb{K}}/N\to G_{\mathbb{K}'}/N$.
  It can be shown that $\sigma_N$ extends to an inner automorphism 
  of the group $G/N$.  We repeat the construction over all open normal subgroups
  of $G$ and obtain  an explicit formula for the required inner automorphism of $G$. 
Lemma \ref{lm3.3} follows. 
\end{proof}

\begin{remark}\label{rmk3.4}
Lemmas \ref{lm3.1}-\ref{lm3.3} are true for the differentiable mappings. 
In this case one obtains the inclusion of the corresponding non-commutative 
fields. 
\end{remark}

\bigskip
Lemmas \ref{lm3.1}-\ref{lm3.3} and formula 
(\ref{eq1.1}) imply that  $F: \mathfrak{M}^4\to\mathfrak{K}$ is a covariant functor.

\subsubsection{}
The detailed proof of  items (i)-(iii) of theorem \ref{thm1.1} is given in \cite[Theorem 4.1]{Nik3}. 
We refer the reader  to this paper for the proof and examples of the sphere knots.

\bigskip
Theorem \ref{thm1.1} is proved.

\subsection{Proof of corollary \ref{cor1.2}}
If  $\mathscr{M}^4\in\mathfrak{M}^4$ is  a simply connected manifold,
then $\pi_1(\mathscr{M}^4)$ is a trivial group. Since $\mathscr{M}^4$ 
is a cover of $S^4$ branched over a surface $X_g\hookrightarrow S^4$,
we conclude that $\pi_1(S^4-X_g)$ is a finite group. 
By the Hurewicz Theorem,  the  homology group 
\begin{equation}
H_1(S^4-X_g; \mathbf{Z})\cong 
\pi_1(S^4-X_g) ~/  ~[\pi_1(S^4-X_g), \pi_1(S^4-X_g)]
\end{equation}
 is also a finite abelian group.

 \medskip
 Consider a commutative diagram in Figure 2. 
 An automorphism of  $H_1(S^4-X_g; \mathbf{Z})$ comes from
 a homeomorphism $h: \mathscr{M}^4\to\mathscr{M}^4$ of the 
 manifold $\mathscr{M}^4$.  Theorem \ref{thm1.1} says that $h$ must correspond 
 to an automorphism $\sigma_h$ of the
 field $\mathbb{K}$, such that $\sigma_h(\mathbb{L})=
 \mathbb{L}$ for a subfield $\mathbb{L}\subseteq\mathbb{K}$. 
 (For simplicity, the reader can think $\mathbb{L}\cong\mathbb{H}$ is the field of quaternions.) 
 Since the automorphisms $\sigma_h$ generate the Galois 
 group of the extension $\mathbb{K} ~| ~\mathbb{L}$,  we conclude that:
\begin{equation}\label{eq3.3}
Gal~\mathbb{K}\cong Aut~(H_1(S^4-X_g; \mathbf{Z})). 
\end{equation}

\medskip
 Recall that the group of automorphisms of a  finite abelian group 
 is always an abelian group. Indeed, a finite abelian groups can be written
 in the form 
 $\mathbf{Z}/p_1\mathbf{Z}\oplus\dots\oplus \mathbf{Z}/p_k\mathbf{Z}$  for some distinct 
 primes $p_i$.  On the other hand, $Aut~(G\oplus H)\cong Aut~(G)\oplus Aut~(H)$,
 where $G$ and $H$ are finite abelian groups of the coprime order.  Thus
\begin{equation}\label{eq3.4}
Aut~(H_1(S^4-X_g; \mathbf{Z}))\cong Aut~( \mathbf{Z}/p_1\mathbf{Z})\oplus\dots\oplus Aut~(\mathbf{Z}/p_k\mathbf{Z}). 
\end{equation}

\medskip
Since the group of automorphism of a cyclic group is  a cyclic group,  using (\ref{eq3.4})  we conclude,  that
the  group  $Aut~(H_1(S^4-X_g; \mathbf{Z}))$ is abelian. In view of (\ref{eq3.3}), the group 
$Gal~\mathbb{K}$ is also abelian.  Corollary \ref{cor1.2} is proved.

\begin{figure}
\begin{picture}(300,90)(-90,10)
\put(10,40){\vector(0,1){35}}
\put(85,70){\vector(0,-1){30}}
\put(75,83){\vector(-1,0){30}}
\put(65,27){\vector(-1,0){18}}
\put(110,83){\vector(1,0){30}}
\put(110,27){\vector(1,0){30}}
\put(153,70){\vector(0,-1){30}}

\put(-25,25){$H_1(S^4-X_g;\mathbf{Z})$}
\put(-35,80){$H_1(\mathscr{M}^4;\mathbf{Z})\cong Id$}
\put(80,80){$\mathscr{M}^4$}
\put(120,90){$F$}
\put(120,35){$F$}
\put(70,25){$S^4-X_g$}
\put(150,80){$\mathbb{K}$}
\put(150,25){$\mathbb{L}$}
\end{picture}
\caption{Simply connected manifold $\mathscr{M}^4$.}
\end{figure}

\subsection{Proof of remark \ref{rmk1.3}}
Let  $\mathbb{K}$ be an abelian extension. Then $Gal ~\mathbb{K}$ is a 
finite abelian group.  Let us show,   that the group  $H_1(S^4-X_g; \mathbf{Z})$ 
can be infinite.  
Indeed,  one can always write $H_1(S^4-X_g; \mathbf{Z})\cong \mathbf{Z}^k\oplus Tors$,
where $k\ge 0$ and $Tors$ is a finite abelian group. 
As explained, we have  $Aut ~(H_1(S^4-X_g; \mathbf{Z}))
\cong Aut ~(\mathbf{Z}^k)\oplus Aut ~(Tors)$.  
Recall  that $Aut ~(\mathbf{Z}^k)\cong GL_k(\mathbf{Z})$.
If $k=1$,  then the group  $Aut ~(\mathbf{Z})\cong \mathbf{Z}/2\mathbf{Z}$
is  abelian and finite.  Yet the group  $H_1(S^4-X_g; \mathbf{Z})\cong \mathbf{Z}\oplus Tors$ 
is infinite.  In other words, the corresponding 4-manifold $\mathscr{M}^4\in\mathfrak{M}^4$
cannot be simply connected.


\section{Rokhlin and Donaldson's Theorems revisited}
In this section we give a proof of the Rokhlin and Donaldson's Theorems
based on  the Galois theory of  non-commutative fields. 
To outline the proof,  let $\mathscr{M}^4\in \mathfrak{M}^4$ be a  simply connected
smooth 4-manifold. The corollary \ref{cor1.2} says that $\mathbb{K}=F(\mathscr{M}^4)$ is an abelian
extension.  Consider a subgroup of the inner automorphisms $\Gamma$ of
the abelian group $Gal ~\mathbb{K}$.  Let  $\mathbb{B}(\Gamma)$ be the corresponding group ring, 
see  (\ref{eq2.3}).  Denote by $Q$  a symmetric bilinear form on  the $\mathbb{B}(\Gamma)$,
see   (\ref{eq2.5}) and remark \ref{rmk2.6}. For $\Gamma$  a finite  abelian group,
 we  calculate both $\mathbb{B}(\Gamma)$ and  $Q$, see
 lemmas \ref{lm4.1} and \ref{lm4.2}. 
 On the other hand, theorem \ref{thm1.1} implies an isomorphism of the $\mathbf{Z}$-modules:
\begin{equation}\label{eq4.1}
\mathbb{B}(\Gamma)\cong H_2(\mathscr{M}^4;\mathbf{Z}),
\end{equation}
 see lemma \ref{lm4.3}. 
 From the map $Q: \mathbb{B}(\Gamma)\times \mathbb{B}(\Gamma)\to\mathbf{Z}$, 
 one gets a  symmetric bilinear form on the homology group 
 $H_2(\mathscr{M}^4;\mathbf{Z})$. 
 As a corollary, we recover  from the arithmetic of $Q$ the  Rokhlin and Donaldson's Theorems
 for the simply connected smooth 4-manifolds.  
 Let us pass to a detailed argument.

 \bigskip
\begin{lemma}\label{lm4.1}
The group ring  of a finite abelian group $\Gamma\cong \mathbf{Z}/p_1\mathbf{Z}\oplus\dots\oplus \mathbf{Z}/p_k\mathbf{Z}$
is isomorphic to a direct sum of the cyclotomic fields, i.e.
\begin{equation}\label{eq4.2}
\mathbb{B}(\Gamma)\cong \mathbf{Z}(\zeta_{p_1})\oplus\dots\oplus\mathbf{Z}(\zeta_{p_k}), 
\end{equation}
where $\zeta_{p_i}$ is  the $p_i$-th root of unity. 
\end{lemma}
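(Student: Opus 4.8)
The plan is to prove the ring isomorphism $\mathbb{B}(\Gamma)\cong\mathbf{Z}(\zeta_{p_1})\oplus\dots\oplus\mathbf{Z}(\zeta_{p_k})$ by first reducing to the case of a single cyclic factor and then invoking the classical structure theory of group rings of finite cyclic groups. First I would recall from (\ref{eq2.3}) that when $\Gamma$ is abelian the center $C$ of $\mathbb{K}^{\times}$ contains all of $\Gamma$, so the group ring $\mathbb{B}(\Gamma)$ is the ordinary group ring $C[\Gamma]$; by the arithmetic normalization used throughout the paper (cf.\ theorem \ref{thm2.7}(i), where $F(S^3)=\mathbf{Z}$) we take the coefficient ring to be $\mathbf{Z}$, so that $\mathbb{B}(\Gamma)\cong\mathbf{Z}[\Gamma]$.

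Next I would use the multiplicativity of the group-ring construction on direct sums: for finite abelian groups $G,H$ one has $\mathbf{Z}[G\oplus H]\cong\mathbf{Z}[G]\otimes_{\mathbf{Z}}\mathbf{Z}[H]$, and — exactly as in the automorphism computation (\ref{eq3.4}) in the proof of corollary \ref{cor1.2}, where the $p_i$ are taken pairwise distinct primes so the orders are coprime — this tensor product splits as a direct sum of the factor group rings. Hence it suffices to treat $\Gamma\cong\mathbf{Z}/p\mathbf{Z}$ for a single prime $p$. For the cyclic prime-order case I would write $\mathbf{Z}[\mathbf{Z}/p\mathbf{Z}]\cong\mathbf{Z}[x]/(x^p-1)$ and factor $x^p-1=(x-1)\Phi_p(x)$ with $\Phi_p$ the $p$-th cyclotomic polynomial; since $\gcd(x-1,\Phi_p(x))$ generates the unit ideal after inverting $p$ but not over $\mathbf{Z}$, the Chinese Remainder map gives an embedding with finite cokernel, and the component $\mathbf{Z}[x]/\Phi_p(x)\cong\mathbf{Z}(\zeta_p)$. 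In the paper's bookkeeping the trivial summand $\mathbf{Z}[x]/(x-1)\cong\mathbf{Z}$ is absorbed (it corresponds to the trivial, i.e.\ inner-by-center, part of the action), leaving the stated $\mathbf{Z}(\zeta_{p_i})$ summands; I would state this identification and cite the standard structure of $\mathbf{Z}[\mathbf{Z}/p\mathbf{Z}]$.

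The step I expect to be the main obstacle is the precise sense in which (\ref{eq4.2}) is an \emph{isomorphism} rather than merely a finite-index inclusion: over $\mathbf{Z}$ the Chinese Remainder map $\mathbf{Z}[x]/(x^p-1)\to\mathbf{Z}(\zeta_p)\oplus\mathbf{Z}$ is injective with cokernel of order $p$, so one genuinely needs either to pass to the normalization, to tensor with $\mathbf{Q}$ (obtaining $\mathbf{Q}(\zeta_{p_i})$), or to interpret $\mathbb{B}(\Gamma)$ as the maximal order — and one must also explain why the trivial summands do not appear on the right-hand side. I would handle this by remarking that for the purposes of (\ref{eq4.1}), where only the underlying $\mathbf{Z}$-module and the bilinear form $Q$ of (\ref{eq2.5}) matter, the isomorphism may be taken after inverting the $p_i$ (equivalently, replacing each group ring by its integral closure), which is harmless since the subsequent Rokhlin/Donaldson application in Section 4 only uses the signature and parity of $Q$ on $H_2(\mathscr{M}^4;\mathbf{Z})$. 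The remaining verifications — that $\mathbf{Z}[\mathbf{Z}/p\mathbf{Z}\oplus\mathbf{Z}/q\mathbf{Z}]$ splits for $p\neq q$, and that $Q$ restricts to the trace form on each $\mathbf{Z}(\zeta_{p_i})$ — are routine and I would defer the latter to lemma \ref{lm4.2}.
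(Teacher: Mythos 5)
The paper's own ``proof'' of lemma \ref{lm4.1} is a one-line citation of [Ayoub \& Ayoub 1969] \cite{AyAy1}; the theorem actually proved there decomposes the \emph{rational} group algebra $\mathbf{Q}[\Gamma]$ as $\bigoplus_{d} a_d\,\mathbf{Q}(\zeta_d)$, the sum running over all divisors $d$ of the exponent of $\Gamma$, with $a_d$ the number of cyclic subgroups (equivalently, quotients) of order $d$. You deserve credit for explicitly flagging the two points the paper elides: over $\mathbf{Z}$ the map $\mathbf{Z}[x]/(x^p-1)\to\mathbf{Z}\oplus\mathbf{Z}[\zeta_p]$ is only a finite-index embedding, not an isomorphism, and trivial summands are missing from the right-hand side of (\ref{eq4.2}).

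However, your reduction step contains a genuine error. From $\mathbf{Z}[G\oplus H]\cong\mathbf{Z}[G]\otimes_{\mathbf{Z}}\mathbf{Z}[H]$ you assert that, for coprime orders, ``this tensor product splits as a direct sum of the factor group rings,'' by analogy with $Aut~(G\oplus H)\cong Aut~(G)\oplus Aut~(H)$ used in (\ref{eq3.4}). This is false: the tensor product has rank $|G|\cdot|H|$ while the direct sum has rank $|G|+|H|$, so no such splitting exists, and the automorphism-group analogy does not transfer to group rings. What is true, rationally, is $\mathbf{Q}(\zeta_d)\otimes_{\mathbf{Q}}\mathbf{Q}(\zeta_e)\cong\mathbf{Q}(\zeta_{de})$ for coprime $d,e$, whence $\mathbf{Q}[\mathbf{Z}/p_1\mathbf{Z}\oplus\dots\oplus\mathbf{Z}/p_k\mathbf{Z}]\cong\bigoplus_{d\mid p_1\cdots p_k}\mathbf{Q}(\zeta_d)$, a sum of $2^k$ cyclotomic fields including the composite ones $\mathbf{Q}(\zeta_{p_ip_j})$, etc. --- not the $k$ summands of (\ref{eq4.2}). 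Consequently your argument, as written, cannot arrive at the stated formula, and no bookkeeping of trivial summands or inversion of the $p_i$ repairs it, since the ranks already disagree for $k\ge 2$: $\dim_{\mathbf{Q}}\mathbf{Q}[\Gamma]=p_1\cdots p_k$ versus $\sum_i(p_i-1)$ on the right-hand side of (\ref{eq4.2}). (That the literal statement of the lemma sits uneasily with the cited Ayoub--Ayoub decomposition is a defect of the paper rather than of your attempt, but a correct elementary proof must track all divisor-indexed components, and your single-cyclic-factor reduction suppresses exactly those.)
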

\begin{proof}
An elegant proof of this fact can be found in [Ayoub \& Ayoub 1969] \cite{AyAy1}. 
\end{proof}
 
 \bigskip
 Thus  to calculate  $Q$,  we can restrict to the cyclotomic fields $\mathbf{Q}(\zeta_{p_i})$ and 
 take  the  tensor product over  $p_i$.   Recall that the {\it trace form}  on $\mathbf{Q}(\zeta_{p_i})$
 is a symmetric bilinear form: 
\begin{equation}\label{eq4.3}
Tr_{\mathbf{Q}(\zeta_{p_i})}:  \mathbf{Q}(\zeta_{p_i})\times\mathbf{Q}(\zeta_{p_i})
\to\mathbf{Q},  \quad\hbox{such that}  \quad (x,y)\mapsto tr(xy),
\end{equation}
 where $tr$ is the trace of an algebraic number. The  trace form (\ref{eq4.3}) 
 is equivalent to  the form  $Tr_{\mathbf{Q}(\zeta_{p_i})}(x,x):= Tr_{\mathbf{Q}(\zeta_{p_i})}(x^2)$  
 via the formula:
\begin{equation}\label{eq4.4}
Tr_{\mathbf{Q}(\zeta_{p_i})}(x,y)=
{1\over 2}\left[ Tr_{\mathbf{Q}(\zeta_{p_i})}(x+y)^2)-Tr_{\mathbf{Q}(\zeta_{p_i})}(x^2)-Tr_{\mathbf{Q}(\zeta_{p_i})}(y^2)\right].
\end{equation}

\bigskip
\begin{lemma}\label{lm4.2}
The following classification  is true:
\begin{equation}\label{eq4.5}
Tr_{\mathbf{Q}(\zeta_{p})}(x^2)=
\left\{
\begin{array}{ccc}
p \langle 1\rangle, ~\hbox{if $p$ is odd prime}\\
2^n\langle 1\rangle\left(\langle 1\rangle\oplus \langle -1\rangle
\oplus (2^{n-1}-1)\times H\right),   ~\hbox{if $p=2^n, ~n\ge 4$}, 
\end{array}
\right.
\end{equation}
where we use the Witt ring notation $p \langle 1\rangle$ for the diagonal quadratic form of dimension $p$,
$2^n\langle 1\rangle$ for the Pfister form of dimension $2^n$ and $H$ for the hyperbolic (split) plane,
see  [Lam 2005] \cite[Chapter X]{L} for definitions.  
\end{lemma}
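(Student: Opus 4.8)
The plan is to compute the trace form on an explicit integral basis of $\mathbf{Q}(\zeta_p)$ and then pin down its isometry class --- equivalently, its class in the Witt ring $W(\mathbf{Q})$ --- by the Hasse--Minkowski invariants: dimension, signature, discriminant, and the Hasse--Witt invariant at each rational prime. This is the per-prime input that, via the tensor-product decomposition following Lemma \ref{lm4.1}, assembles the form $Q$ on all of $\mathbb{B}(\Gamma)$. I treat the odd-prime case and the $2$-power case separately, since the arithmetic of the ramified prime behaves very differently in the two regimes.

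For $p$ an odd prime I would start from the power basis $1,\zeta_p,\dots,\zeta_p^{\,p-2}$ of $\mathbf{Z}[\zeta_p]$. The Gram matrix of $Tr_{\mathbf{Q}(\zeta_p)}$ in this basis has entries $Tr(\zeta_p^{\,i+j})$, all of which are read off from the elementary identity $Tr_{\mathbf{Q}(\zeta_p)/\mathbf{Q}}(\zeta_p^{\,k})=p-1$ if $p\mid k$ and $-1$ otherwise. Passing to the basis $1,\zeta_p-1,(\zeta_p-1)^2,\dots,(\zeta_p-1)^{\,p-2}$, which is adapted to the unique ramified prime via $(p)=(\zeta_p-1)^{p-1}$, exhibits the form as unimodular over $\mathbf{Z}[1/p]$, so the only local invariant requiring genuine work is the one at $p$; that contribution is governed by the quadratic Gauss sum $\sum_{a}\left(\frac{a}{p}\right)\zeta_p^{\,a}=\sqrt{(-1)^{(p-1)/2}\,p}$, which fixes the discriminant as $(-1)^{(p-1)/2}p^{\,p-2}$. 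Together with the vanishing signature (the field is totally complex, $r_1=0$) this identifies the form. Alternatively one may simply invoke the classical determination of the trace forms of cyclotomic fields (Conner--Perlis; see also [Lam 2005] \cite{L}).

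For $p=2^{n}$ with $n\ge4$ I would work along the tower $\mathbf{Q}\subset\mathbf{Q}(i)\subset\mathbf{Q}(\zeta_8)\subset\cdots\subset\mathbf{Q}(\zeta_{2^{n}})$ and use transitivity of the trace, $Tr_{\mathbf{Q}(\zeta_{2^{n}})/\mathbf{Q}}=Tr_{\mathbf{Q}(i)/\mathbf{Q}}\circ Tr_{\mathbf{Q}(\zeta_{2^{n}})/\mathbf{Q}(i)}$, so that the global trace form is the Scharlau transfer of the relative trace form. The point is that for $n\ge4$ the relative trace form acquires enough isotropy that the transfer splits off exactly $2^{n-1}-1$ hyperbolic planes, leaving only the rank-$2$ anisotropic kernel $\langle1\rangle\oplus\langle-1\rangle$, while the $n$-fold Pfister factor records the $2$-power discriminant of $\mathbf{Q}(\zeta_{2^{n}})$; the threshold $n\ge4$ is precisely where this hyperbolic collapse first takes place. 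The outcome can again be cross-checked against the known structure theory of trace forms of $2$-power cyclotomic fields.

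The hard part will be the $2$-power case: controlling the Hasse--Witt invariant at the prime $2$ and proving that precisely $2^{n-1}-1$ hyperbolic planes split off once $n\ge4$ (and not for $n\le3$) requires either a careful induction up the tower with Scharlau's transfer formulas or an appeal to Serre's formula expressing the Hasse--Witt invariant of a trace form through the discriminant and the second Stiefel--Whitney class of the extension. Once the local data --- trivial ramification (hence unimodularity) at the odd primes, the delicate invariant at $2$, and signature $0$ at the archimedean place --- are all in hand, Hasse--Minkowski gives the asserted isometry class, and formula (\ref{eq4.4}) carries it back to the bilinear trace form.
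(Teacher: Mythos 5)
Your proposal takes a genuinely different route from the paper, but as written it does not close. The paper offers no argument at all for Lemma \ref{lm4.2}: its ``proof'' is a citation of Otake \cite[Theorem 2.1]{Ota1}, which classifies the \emph{integral} trace forms of cyclotomic fields and their canonical forms over rings of $p$-adic integers. Your plan — compute dimension, signature, discriminant and Hasse--Witt invariants and invoke Hasse--Minkowski — only pins down the class of the form over $\mathbf{Q}$ (its Witt class), whereas the statement, and the way it is used later in Corollary \ref{cor4.4} (an explicit diagonalization in an integral basis of $\mathbf{Z}[\zeta_p]$ transported to the intersection form on $H_2(\mathscr{M}^4;\mathbf{Z})$), is an integral one; rationally equivalent $\mathbf{Z}$-forms need not be $\mathbf{Z}$-equivalent, so the local rational invariants cannot by themselves deliver the decomposition (\ref{eq4.5}). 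Worse, you never check that the invariants you compute actually agree with the right-hand side of (\ref{eq4.5}), and in the odd-prime case they cannot: as you yourself note, $\mathbf{Q}(\zeta_p)$ is totally imaginary, so $Tr_{\mathbf{Q}(\zeta_p)}(x^2)$ has signature $r_1=0$ and rank $p-1$, while $p\langle 1\rangle$ is positive definite of rank $p$. So the identification you announce (``this identifies the form'') is inconsistent with the target; to get the asserted positive-definite diagonal shape one must work with the form Otake actually treats (the integral trace pairing on $\mathbf{Z}[\zeta_p]$, in its Hermitian incarnation $Tr(x\bar y)$), and your signature/discriminant bookkeeping never engages with that object or reconciles the rank discrepancy.

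The $2$-power case, which you correctly flag as the hard part, is in the end only asserted: the claim that the Scharlau transfer splits off exactly $2^{n-1}-1$ hyperbolic planes, and does so precisely when $n\ge 4$, is a restatement of the target rather than a derivation — no computation of the Witt index up the tower, and no control of the invariant at $2$, is actually carried out. Note also that your explanation of the threshold conflicts with the paper's: in the paper the restriction $n\ge 4$ comes from Remark \ref{rmk4.3} (the cases $n\le 3$ are discarded because they are identified with the complex numbers, quaternions and octonions, not contained in $\mathbb{K}$), not from a ``hyperbolic collapse'' first occurring at $n=4$. If you wish to avoid simply citing Otake or Conner--Perlis, you would need to run the transfer induction explicitly and then still upgrade the conclusion from rational to integral equivalence; as it stands the proposal has a genuine gap in both halves.
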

\begin{proof}
We refer the reader to  [Otake 2014] \cite[Theorem 2.1]{Ota1} 
 for the  proof. 
\end{proof}

\bigskip
\begin{remark}\label{rmk4.3}
The case $n=1$ ($n=2$; $n=3$) in formula (\ref{eq4.5}) corresponds to the complex numbers
(quaternions;  octonions), respectively; see  [Lam 2005] \cite[Chapter X]{L}. 
We omit these values of $n$,  since none of the fields  is contained in $\mathbb{K}$. 
\end{remark}

\bigskip
\begin{lemma}\label{lm4.3}
If  $H_2(\mathscr{M}^4;\mathbf{Z})$ is the second homology of a simply connected
manifold $\mathscr{M}^4$, then $\mathbb{B}(\Gamma)\cong H_2(\mathscr{M}^4;\mathbf{Z})$,
 where $\cong$ is an isomorphism of the corresponding $\mathbf{Z}$-modules. 
\end{lemma}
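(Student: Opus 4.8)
The plan is to show that $\mathbb{B}(\Gamma)$ and $H_2(\mathscr{M}^4;\mathbf{Z})$ are both finitely generated free $\mathbf{Z}$-modules and that their ranks coincide, each rank being an explicit function of the finite abelian group $\Gamma$ (equivalently, of $\mathbb{K}=F(\mathscr{M}^4)$). First I would record the structural facts on the two sides. Since $\mathscr{M}^4$ is a closed simply connected $4$-manifold, $H_1(\mathscr{M}^4;\mathbf{Z})=0$; moreover $\pi_1(\mathscr{M}^4)=1$ forces $w_1=0$, so $\mathscr{M}^4$ is orientable and Poincar\'e duality applies, and together with the universal coefficient theorem this shows $H_2(\mathscr{M}^4;\mathbf{Z})$ is torsion-free, hence $H_2(\mathscr{M}^4;\mathbf{Z})\cong\mathbf{Z}^{b_2}$ with $b_2=b_2(\mathscr{M}^4)$ the second Betti number. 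On the other side, by Lemma \ref{lm4.1} the ring $\mathbb{B}(\Gamma)$ is isomorphic to the direct sum of rings of algebraic integers $\mathbf{Z}(\zeta_{p_1})\oplus\cdots\oplus\mathbf{Z}(\zeta_{p_k})$, hence is a finitely generated free $\mathbf{Z}$-module whose rank $r(\Gamma)$ is an explicit function of the invariant factors $p_1,\dots,p_k$ of $\Gamma$. Thus it is enough to prove $b_2=r(\Gamma)$.

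Next I would compute $b_2$ from the branched-cover description. As in the proof of Corollary \ref{cor1.2}, simple connectivity of $\mathscr{M}^4$ makes $\pi_1(S^4-X_g)$ finite and $Gal~\mathbb{K}\cong Aut\,(H_1(S^4-X_g;\mathbf{Z}))$ abelian. The manifold $\mathscr{M}^4$ is a $4d$-fold PL branched cover of $S^4$ branched over the transversely immersed surface $X_g$, and its monodromy, hence all local ramification data along $X_g$, is determined by the finite index subgroup $N\subseteq Mod~\mathscr{M}^3$. Decomposing $S^4$ as $(S^4-X_g)\sqcup X_g$ and pulling back, one gets the Riemann--Hurwitz identity $\chi(\mathscr{M}^4)=4d\,\chi(S^4)-4d\,\chi(X_g)+\chi(\widetilde{X})$, where $\widetilde{X}$ is the preimage of the branch locus; with $\chi(S^4)=2$, $\chi(X_g)=2-2g$, $b_0=b_4=1$ and $b_1=b_3=0$ this becomes $b_2=\chi(\mathscr{M}^4)-2=8dg+\chi(\widetilde{X})-2$. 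Since $d$ and $\chi(\widetilde{X})$ are determined by $N$, hence by $\mathbb{K}$, hence by $Gal~\mathbb{K}$ together with its inner-automorphism subgroup $\Gamma$ (and the short exact sequence of Section 2.2, which fixes $|G/\Gamma|=\dim_C\mathbb{B}(\Gamma)$), this expresses $b_2$ as a function of $\Gamma$ alone.

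Finally I would match the two expressions. By Theorem \ref{thm1.1} the homeomorphism type of $\mathscr{M}^4$, and in particular $b_2$, depends only on $\mathbb{K}$; and $r(\Gamma)=\sum_i\varphi(p_i)$ from Lemma \ref{lm4.1} likewise depends only on $\mathbb{K}$ through $Gal~\mathbb{K}$. Comparing the Riemann--Hurwitz count for $b_2$ with $r(\Gamma)$ gives $b_2=r(\Gamma)$, and choosing bases on the two free $\mathbf{Z}$-modules yields the required isomorphism $\mathbb{B}(\Gamma)\cong H_2(\mathscr{M}^4;\mathbf{Z})$, non-canonical but compatible with formula (\ref{eq4.1}). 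The main obstacle is the Riemann--Hurwitz bookkeeping of the second step: one must identify $\widetilde{X}$ and the ramification indices along the immersed surface $X_g$ precisely enough to see that $8dg+\chi(\widetilde{X})-2$ agrees on the nose with the rank contributed by $\Gamma$; here the canonical choice of $\iota$ and $X_g$ fixed in Remark \ref{rmk2.1} is what makes the comparison well posed.
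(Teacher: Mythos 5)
Your reduction of the lemma to a rank count is reasonable in outline, but the decisive step is missing: you never prove the equality $b_2(\mathscr{M}^4)=\mathrm{rank}_{\mathbf{Z}}\,\mathbb{B}(\Gamma)$. In your final paragraph you argue that both quantities are determined by $\mathbb{K}$ (equivalently by $Gal~\mathbb{K}$ together with $\Gamma$) and then assert that ``comparing'' the Riemann--Hurwitz count with $r(\Gamma)$ gives $b_2=r(\Gamma)$; but two quantities can each be functions of the same data without being equal, so this is a non sequitur, and you yourself flag the ramification bookkeeping for the immersed branch locus (the term $\chi(\widetilde X)$, the local degrees along $X_g$, the double points of the immersion) as an unresolved obstacle. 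Since the rank equality is the entire content of the lemma---the freeness of $H_2$ of a closed simply connected $4$-manifold and of $\mathbb{B}(\Gamma)$ is routine---the proposal as written does not establish the statement.

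The paper takes a different and shorter route: by Theorem \ref{thm1.1} the elements of $Gal~\mathbb{K}$, hence of $\Gamma\subseteq Gal~\mathbb{K}$, act on $\mathscr{M}^4$ by homeomorphisms, inducing a representation $\rho\colon\Gamma\to Aut\,(H_2(\mathscr{M}^4;\mathbf{Z}))$; it then identifies $\rho$ with the regular representation of $\Gamma$, which forces $H_2(\mathscr{M}^4;\mathbf{Z})\cong\mathbf{Z}^{k}$ with $k=|\Gamma|$ and hence gives the $\mathbf{Z}$-module isomorphism with the group ring $\mathbb{B}(\Gamma)$ directly, with no Euler-characteristic computation at all. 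If you want to salvage your approach you would have to carry out the branched-cover count explicitly and show it returns exactly the $\mathbf{Z}$-rank of $\mathbb{B}(\Gamma)$; note also that the rank you quote from Lemma \ref{lm4.1}, namely $\sum_i\varphi(p_i)$, does not agree with the rank $|\Gamma|=\prod_i p_i$ of the group ring that the paper's regular-representation argument (and the bases used in the proof of Corollary \ref{cor4.4}) implicitly uses, so before any comparison you would have to decide which normalization of $\mathbb{B}(\Gamma)$ the equality is supposed to match.
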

\begin{proof}
Recall that $Gal~\mathbb{K}$ is acting on the field $\mathbb{K}$ by the automorphisms
of $\mathbb{K}$. Theorem \ref{thm1.1} says that  such automorphisms correspond
to the homeomorphisms of the manifold $\mathscr{M}^4$. Recall that each homeomorphism
$h:  \mathscr{M}^4\to \mathscr{M}^4$ defines an linear map $h_*:  H_2(\mathscr{M}^4;\mathbf{Z})
\to  H_2(\mathscr{M}^4;\mathbf{Z})$. 
Since $\Gamma\subseteq Gal ~\mathbb{K}$, one gets a linear representation:
\begin{equation}\label{eq4.6}
\rho: \Gamma\to  Aut~(H_2(\mathscr{M}^4;\mathbf{Z})). 
\end{equation}

\medskip
Consider a regular representation of $\Gamma$ by the automorphisms of 
$H_2(\mathscr{M}^4;\mathbf{Z})\cong\mathbf{Z}^k$.  It is easy to see, that
such a representation coincides with $\rho$ and, therefore, we conclude that
$k=|\Gamma|$. Moreover, since $\mathbb{B}(\Gamma)$ is the group ring 
of $\Gamma$, one gets an isomorphism  $\mathbb{B}(\Gamma)\cong H_2(\mathscr{M}^4;\mathbf{Z})$
between the corresponding $\mathbf{Z}$-modules. Lemma \ref{lm4.3} is proved.
\end{proof}

\bigskip
\begin{corollary}\label{cor4.4}
{\bf (Rokhlin and Donaldson)}

\smallskip
(i)  Definite intersection form of a simply connected smooth 4-manifold  is diagonalizable;

\medskip
(ii) Signature of the intersection form of  a simply connected smooth 4-manifold   is divisible by 16. 

\end{corollary}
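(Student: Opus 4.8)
The plan is to assemble Corollary \ref{cor4.4} from the machinery already developed, treating it as essentially a bookkeeping exercise over the pieces supplied by Lemmas \ref{lm4.1}--\ref{lm4.3}. First I would fix a simply connected smooth $\mathscr{M}^4\in\mathfrak{M}^4$. By Corollary \ref{cor1.2} the field $\mathbb{K}=F(\mathscr{M}^4)$ is an abelian extension, so $\Gamma\subseteq Gal~\mathbb{K}$ is a finite abelian group, say $\Gamma\cong\mathbf{Z}/p_1\mathbf{Z}\oplus\dots\oplus\mathbf{Z}/p_k\mathbf{Z}$. Lemma \ref{lm4.3} gives the $\mathbf{Z}$-module identification $\mathbb{B}(\Gamma)\cong H_2(\mathscr{M}^4;\mathbf{Z})$, and under this identification the Frobenius form $Q$ of (\ref{eq2.5}) -- symmetric by remark \ref{rmk2.6}, since $\mathbb{B}(\Gamma)$ is commutative -- is to be matched with the intersection form on $H_2(\mathscr{M}^4;\mathbf{Z})$. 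The first substantive claim is therefore: \emph{the intersection form of $\mathscr{M}^4$ is equivalent, over $\mathbf{Z}$, to the trace form $\bigoplus_i Tr_{\mathbf{Q}(\zeta_{p_i})}$.} I would justify this by combining the direct-sum decomposition $\mathbb{B}(\Gamma)\cong\bigoplus_i\mathbf{Z}(\zeta_{p_i})$ of Lemma \ref{lm4.1} with the observation that the Frobenius form on a group ring of an abelian group, restricted to each cyclotomic summand, \emph{is} the trace form (\ref{eq4.3}); unimodularity of the intersection form of a closed simply connected $4$-manifold then pins down the identification up to equivalence.

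With that bridge in place, part (i) is immediate: in the \textbf{definite} case every $p_i$ is odd (the even-prime-power summands in (\ref{eq4.5}) carry hyperbolic blocks $H$, hence are indefinite, and by remark \ref{rmk4.3} the small exceptional values $n=1,2,3$ do not occur in $\mathbb{K}$), so by Lemma \ref{lm4.2} each summand is $p_i\langle 1\rangle$, a diagonal form; the orthogonal sum of diagonal forms is diagonal, so the intersection form is diagonalizable. For part (ii) I would compute the signature summand by summand. An odd-prime block $p\langle 1\rangle$ contributes signature $p\equiv\pm1\pmod 8$ after the relevant unimodular reduction; a $2^n$-block ($n\ge 4$) contributes, by the Witt-ring formula in (\ref{eq4.5}), $2^n\langle 1\rangle\big(\langle 1\rangle\oplus\langle -1\rangle\oplus(2^{n-1}-1)H\big)$, whose signature is a multiple of $2^n$ and in particular divisible by $16$. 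Tracking the congruences modulo $16$ across the orthogonal sum, and using that an even unimodular indefinite form over $\mathbf{Z}$ is determined by its rank and signature while an even definite one has signature a multiple of $8$, I would pull the divisibility by $16$ out of the arithmetic of $Q$, exactly as advertised in the paragraph preceding Lemma \ref{lm4.1}.

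The main obstacle, I expect, is the first step: verifying precisely that the abstract Frobenius pairing $Q$ on $\mathbb{B}(\Gamma)$ corresponds, under the isomorphism of Lemma \ref{lm4.3}, to the \emph{intersection} form on $H_2(\mathscr{M}^4;\mathbf{Z})$ and not merely to some symmetric bilinear form of the right rank. The isomorphism in Lemma \ref{lm4.3} is produced via the regular representation of $\Gamma$, so one must check that the nondegeneracy, symmetry, and -- crucially -- the unimodularity and evenness (for $\mathscr{M}^4$ with even form) of the two pairings match; Poincar\'e duality forces unimodularity on the topological side, while on the algebraic side one reads it off from the Frobenius structure of $\mathbb{B}(\Gamma)$. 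Once this compatibility is granted, (i) and (ii) are formal consequences of Lemmas \ref{lm4.1}, \ref{lm4.2}, remark \ref{rmk4.3}, and the classification of unimodular symmetric bilinear forms over $\mathbf{Z}$; I would state that classification as the one external input and cite [Lam 2005]\cite{L} for it, mirroring how Lemma \ref{lm4.2} is cited.

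\begin{proof}[Proof sketch of Corollary \ref{cor4.4}]
Let $\mathscr{M}^4\in\mathfrak{M}^4$ be simply connected. By Corollary \ref{cor1.2}, $\mathbb{K}=F(\mathscr{M}^4)$ is abelian, so $\Gamma\cong\mathbf{Z}/p_1\mathbf{Z}\oplus\dots\oplus\mathbf{Z}/p_k\mathbf{Z}$. Lemma \ref{lm4.1} gives $\mathbb{B}(\Gamma)\cong\bigoplus_i\mathbf{Z}(\zeta_{p_i})$, and on each summand the Frobenius form (\ref{eq2.5}) is the trace form (\ref{eq4.3}), which by Lemma \ref{lm4.2} and remark \ref{rmk4.3} takes the form $p_i\langle 1\rangle$ for $p_i$ odd and a Pfister-times-$(\langle 1\rangle\oplus\langle -1\rangle\oplus(2^{n-1}-1)H)$ form for $p_i=2^n$, $n\ge 4$. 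By Lemma \ref{lm4.3}, $\mathbb{B}(\Gamma)\cong H_2(\mathscr{M}^4;\mathbf{Z})$ as $\mathbf{Z}$-modules, and under this identification $Q$ is (a unimodular representative of) the intersection form. If the form is definite, only odd $p_i$ occur, each block is diagonal, hence so is the whole form, giving (i). For (ii), the signature of each odd block is $\equiv\pm1\pmod 8$ and of each $2^n$-block is divisible by $16$; combining the congruences with the classification of unimodular symmetric bilinear forms over $\mathbf{Z}$ (see [Lam 2005]\cite[Chapter X]{L}), the signature of the intersection form of an even simply connected smooth $\mathscr{M}^4$ is divisible by $16$, which is (ii).
\end{proof}
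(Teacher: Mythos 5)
Your proposal follows essentially the same route as the paper: Corollary \ref{cor1.2} together with Lemmas \ref{lm4.1}--\ref{lm4.3} and remark \ref{rmk4.3} identifies the intersection form on $H_2(\mathscr{M}^4;\mathbf{Z})$ with the cyclotomic trace form of Lemma \ref{lm4.2}, the odd-$p$ blocks $p\langle 1\rangle$ give diagonalizability for (i), and the $p=2^n$, $n\ge 4$ blocks give signature divisible by $16$ for (ii). The only divergences are minor: where you track mod-$8$ congruences over mixed blocks and invoke the classification of unimodular forms, the paper simply restricts to a single $2^n$-block and reads the signature $2^n$ off the explicit forms (\ref{eq4.10})--(\ref{eq4.11}); and the bridge you rightly flag as the main obstacle --- that the Frobenius pairing $Q$ under the identification of Lemma \ref{lm4.3} really is the intersection pairing --- is likewise asserted rather than proved in the paper.
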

\begin{proof}
As explained, we  identify the intersection form: 
\begin{equation}\label{eq4.7}
 H_2(\mathscr{M}^4;\mathbf{Z})\times  H_2(\mathscr{M}^4;\mathbf{Z})\to\mathbf{Z}
\end{equation}
with  the trace form $Tr_{\mathbf{Q}(\zeta_{p})}(x,y)$  given  by the formulas (\ref{eq4.3})-(\ref{eq4.5}). 
Accordingly, we have to consider the following two cases.

\bigskip
(i)  Let us consider the case $Tr_{\mathbf{Q}(\zeta_{p})}(x^2)=p \langle 1\rangle$, where $p$ is an odd prime. 
Let $\{1,\zeta_p,\dots,\zeta_p^{p-1}\}$ be the standard basis in the ring of integers 
$\mathbf{Z}[\zeta_p]$ 
of the cyclotomic field $\mathbf{Q}(\zeta_p)$. The  corresponding formula (\ref{eq4.5})
can be written as:  
\begin{equation}\label{eq4.8}
Tr_{\mathbf{Q}(\zeta_{p})}(x^2)=\sum_{i=0}^{p-1}x_i^2,\qquad x_i\in\mathbf{Z}, 
\end{equation}
where $x=x_0+x_1\zeta_p+\dots+x_{p-1}\zeta_p^{p-1}$ for an $x\in\mathbf{Z}[\zeta_p]$. 
Using (\ref{eq4.4}),  one gets a symmetric bilinear form on $H_2(\mathscr{M}^4;\mathbf{Z})$:
\begin{equation}\label{eq4.9}
Tr_{\mathbf{Q}(\zeta_{p})}(x,y)=\sum_{i=0}^{p-1} x_iy_i, \qquad x_i,y_i\in\mathbf{Z}.
\end{equation}
It remains to notice, that $\mathscr{M}^4\in\mathfrak{M}^4$ is a smooth
manifold and (\ref{eq4.9}) is a positive definite diagonalizable intersection form.
Item (i) of corollary \ref{cor4.4} follows.

\bigskip
(ii)  Let us consider the case $Tr_{\mathbf{Q}(\zeta_{p})}(x^2)=2^n\langle 1\rangle\left(\langle 1\rangle\oplus \langle -1\rangle
\oplus (2^{n-1}-1)\times H\right)$, where $p=2^n$ and $n\ge 4$;  see remark  \ref{rmk4.3} explaining the restriction $n\ge 4$. 
Let $\{1,\zeta_{2^{n+1}},\dots,\zeta_{2^{n+1}}^{2^{n+1}-1}\}$ be the standard basis in the 
$\mathbf{Z}[\zeta_{2^{n+1}}]$.  The corresponding formula (\ref{eq4.5})
can be written as: 
\begin{equation}\label{eq4.10}
Tr_{\mathbf{Q}(\zeta_{2^{n+1}})}(x^2)=   
\sum_{i=0}^{2^n-1} x_i^2+\left(x_{2^n}^2-x_{2^n+1}^2\right)+\left(\sum_{i=2^n+2}^{3\times 2^{n-1}-1}x_i^2
-\sum_{i=3\times 2^{n-1}}^{2^{n+1}-1} x_i^2\right),
\end{equation}
where $x=x_0+\dots+x_{2^{n+1}-1} \zeta_{2^{n+1}}^{2^{n+1}-1}  \in \mathbf{Z}[\zeta_{2^{n+1}}]$. 
Using (\ref{eq4.4}),  one obtains a symmetric bilinear form:
\begin{eqnarray}\label{eq4.11}
Tr_{\mathbf{Q}(\zeta_{2^{n+1}})}(x,y) &=&   
\sum_{i=0}^{2^n-1} x_iy_i ~+\cr 
+\left(x_{2^n}y_{2^n}-x_{2^n+1}y_{2^n+1}\right)
&+&\left(\sum_{i=2^n+2}^{3\times 2^{n-1}-1}x_iy_i
-\sum_{i=3\times 2^{n-1}}^{2^{n+1}-1} x_iy_i\right). 
\end{eqnarray}
It is easy to see, that  $Tr_{\mathbf{Q}(\zeta_{2^{n+1}})}(x,y)$ is a diagonal bilinear form on 
$H_2(\mathscr{M}^4;\mathbf{Z})$.  The signature of    (\ref{eq4.11})  
is equal to $2^n$,  since  the number of positive and negative 1's for the terms in brackets is the same,
while the signature of the first sum is $2^n$. 
It remains to notice, that $n\ge 4$ and, therefore, the signature of $Tr_{\mathbf{Q}(\zeta_{2^{n+1}})}(x,y)$
is divisible by $16$. 
Since $\mathscr{M}^4$ is a simply connected smooth 4-manifold, one gets item (ii) of corollary \ref{cor4.4}. 
\end{proof}


\bibliographystyle{amsplain}


\end{document}